\newtheorem{theorem}{Theorem}[section]
\newtheorem{lemma}[theorem]{Lemma}
\newtheorem{proposition}[theorem]{Proposition}
\newtheorem*{claim}{Claim}
\newtheorem{question}[theorem]{Question}
\newtheorem{corollary}[theorem]{Corollary}
\theoremstyle{definition}
\newtheorem{definition}[theorem]{Definition}
\theoremstyle{remark}
\newtheorem{remark}[theorem]{Remark}
\newtheorem{example}[theorem]{Example}
\DeclareMathOperator{\Adm}{Adm}
\DeclareMathOperator{\OT}{OT}
\DeclareMathOperator{\FR}{FR}
\begin{document}

\title{Ramsey Orderly Algebras as a New Approach to Ramsey Algebras}
\author{Wen Chean Teh}
\address{School of Mathematical Sciences\\
Universiti Sains Malaysia\\
11800 USM, Malaysia}
\email[Corresponding author]{dasmenteh@usm.my}
\author{Zu Yao Teoh}
\address{School of Mathematical Sciences\\
Universiti Sains Malaysia\\
11800 USM, Malaysia}
\email{teohzuyao@gmail.com}

\begin{abstract}
Ramsey algebras are algebras that induce Ramsey spaces, which are generalizations of the Ellentuck space and the Milliken space. Previous work has suggested a possible local version of Ramsey algebras induced by infinite
sequences. We formulate this local version and call it Ramsey orderly algebra. In this paper, we present an introductory treatment of this new notion and provide justification for it to be a sound approach for further study in Ramsey algebras. The main connection is that an algebra is Ramsey if and only if each of its induced orderly algebra is Ramsey.
\end{abstract}

\keywords{Ramsey algebra, Ramsey orderly algebra,   Ramsey space, orderly term, Ellentuck's Space, Hindman's Theorem}

\subjclass[2000]{Primary 05A17; Secondary 03B80}
\maketitle

\section{Introduction}

Ramsey spaces as introduced by Carlson in \cite{tC88} are generalizations of the Ellentuck space \cite{eE74} and the Milliken space \cite{kM75}.
This notion of Carlson has since then attracted a considerable amount of interest due to its power to derive a proliferation of known standard Ramsey
theoretic results based on the existence of certain Ramsey spaces of variable words \cite{tC88}. Examples of standard results are the
dual Ellentuck Theorem \cite{CS84}, the Graham-Rothschild theorem on \mbox{$n$-parameter} sets \cite{GR71}, and the Hales-Jewett Theorem \cite{HJ63}.
A modern reference to the topic is \cite{sT10}.

The topological property of a Ramsey space can, in fact, be reduced to some combinatorial property. This is
essentially captured by the abstract version of Ellentuck's Theorem, first pointed out by Carlson in the same article. In the context of a Ramsey space
associated to an algebra, such as a Ramsey space of variable words, that combinatorial property
is reflected in the algebra itself. For this reason, Carlson conceived the notion of Ramsey algebras and its first results followed suit through the
work of the first  author \cite{wcT13b,wcT13a,wcT13,wcT13c}.

Every semigroup is a Ramsey algebra. The author's work in~\cite{wcT13a}
generalizes the result to any algebra having sufficiently many infinite sequences of the underlying set that induce
what are here called orderly semigroups (see Example~\ref{1211b}). It is this observation that motivated the introduction of orderly algebras
(Definition~\ref{1511a}). This notion shifts the subject of Ramsey algebra from a global perspective to a local, sequential one.
Specifically, the characterization of Ramsey algebras can be reduced to the characterization of Ramsey orderly algebras.

This paper presents preliminary findings to justify the pertinence of this notion for the study of Ramsey algebras. The authors believe that
approaching the subject of Ramsey algebras through orderly algebras might be better suited for further studies of the subject. Besides that, the paper
aims to disseminate the subject of Ramsey algebras to the combinatorial and logical community to encourage participation in the study of this
subject, which is at its infancy stage. One open problem in the subject is the characterization of Ramsey algebras in terms of the property of the
underlying operations. At this point of time, little is known about which common algebras, apart from semigroups, are Ramsey.


\section{Preliminary}
The set of natural numbers and the set of positive integers are denoted by $\omega$ and $\mathbb{N}$ respectively.
The set of infinite sequences in $A$ is denoted by $^{\omega}\!A$. The cardinality of  a set $A$ is denoted by $\vert A\vert$.


To us an \emph{algebra} is a pair $(A, \mathcal{F})$, where $A$ is a nonempty set and $\mathcal{F}$ is a collection of operations on $A$, none of which
is nullary. We will write $(A,\{f\})$ simply as $(A,f)$.

We assume the reader is familiar with the syntax and semantics of first order logic. In particular, we are only concerned with purely functional
first-order logic $\mathcal{L}$, that is, logic whose non-logical symbols are functional. 
Fix a list of (syntactic) variables, $v_0,v_1,v_2, \dotsc$.  The \emph{index} of $v_i$ is $i$.
The \emph{terms} of such a language are expressions built up in the usual way from the variables and the function symbols.
An \emph{$\mathcal{L}$-algebra} is an $\mathcal{L}$-structure in the usual sense.


Suppose that $\mathfrak{A}$ is an $\mathcal{L}$-algebra. An \emph{assignment} is (identified with) an infinite sequence whose terms are elements of the
\emph{universe} $\Vert \mathfrak{A}\Vert$ of $\mathfrak{A}$.  The \emph{interpretation} of a term $t$ under $\mathfrak{A}$ and assignment $\vec{a}$, denoted
$t^{\mathfrak{A}}[\vec{a}]$, is defined inductively in the usual way.


For each algebra $\mathfrak{A}=(A,\mathcal{F})$, there is a natural language associated to it. Let $\mathcal{L}_{\mathcal{F}}$ denote the language $\{
\,\underline{f} \mid f \in \mathcal{F}\,\}$ where $\underline{f}$ and $f$ have the same arity for every $f \in \mathcal{F}$ and $\underline{f}$ and
$\underline{g}$ are distinct whenever $f$ and $g$ are distinct. We will identify  $\mathfrak{A}$  with the $\mathcal{L}_{\mathcal{F}}$-algebra whose
universe is $A$ and the interpretation of $\underline{f}$ is $f$ for every $f\in \mathcal{F}$.

To discuss Ramsey algebras or Ramsey orderly algebras, the notion of a ``term" is inadequate. We need a stronger notion.

\begin{definition}\label{1112d}
Suppose that $\mathcal{L}$ is a language. An \emph{orderly term} of $\mathcal{L}$ is a term of $\mathcal{L}$ such that the indices  of the variables appearing in
it from left to right is strictly increasing. The set of orderly terms of $\mathcal{L}$ is denoted by $\OT(\mathcal{L})$.
If $t,t'\in \OT(\mathcal{L})$, then $t < t'$ means that the index of the last variable in $t$ is less than the index of the first variable in $t'$.
An infinite  sequence $\vec{t}$ of orderly terms of $\mathcal{L}$ is \emph{admissible} if{f}  it is increasing with respect to $<$. The set of
admissible sequences of $\mathcal{L}$ is denoted by
$\Adm(\mathcal{L})$.
\end{definition}

\begin{definition}\label{0524c}
Suppose that $\mathfrak{A}=(A, \mathcal{F})$ is an algebra and $\vec{a}, \vec{b}\in {^\omega}\!A$.
We say that $\vec{b}$ is a \emph{reduction} of $\vec{a}$ (with respect to $\mathcal{F}$), denoted $\vec{b} \leq_{\mathcal{F}} \vec{a}$, if{f} there
exists  $\vec{t}\in\Adm(\mathcal{L}_{\mathcal{F}})$ such that $\vec{b}(i)=(\vec{t}(i))^{\mathfrak{A}}[\vec{a}]$ for all $i \in \omega$.
\end{definition}

The relation $\leq_{\mathcal{F}}$ is reflexive and transitive.

\begin{definition}
Suppose that $\mathfrak{A}=(A, \mathcal{F})$ is an $\mathcal{L}$-algebra and $\vec{a}\in {^\omega}\!A$. The \emph{set  of finite reductions} of
$\vec{a}$ (with respect to $\mathcal{F}$),
denoted  $\FR_{\mathcal{F}}(\vec{a})$,  is defined by
\[\FR_{\mathcal{F}}(\vec{a})=\{\,t^{\mathfrak{A}}[\vec{a}]\mid t \in \OT(\mathcal{L})\,  \}.\]
\end{definition}

\begin{definition}
Suppose $(A, \mathcal{F})$ is an algebra.
We say that $(A, \mathcal{F})$ is \emph{Ramsey} if{f}
for every $\vec{a}\in {^\omega}\!A$ and $X \subseteq A$, there exists $\vec{b} \leq_{\mathcal{F}} \vec{a}$ such that $\FR_{\mathcal{F}}(\vec{b})$ is
either contained in or disjoint from $X$.
\end{definition}

If for every $\vec{a} \in {^\omega}\!A$, there exists  $\vec{b} \leq_{\mathcal{F}} \vec{a}$ such that  $\vert \FR_{\mathcal{F}}(\vec{b}) \vert=1$, then
$(A, \mathcal{F})$ is trivially Ramsey, and we say that it is a \emph{degenerate Ramsey algebra}.

The following localized version of Ramsey algebra is very relevant to this work.

\begin{definition}
Suppose $(A,\mathcal{F})$ is an algebra and $\vec{a}\in {^\omega}\!A$. We say that $(A, \mathcal{F})$ is \emph{Ramsey below $\vec{a}$} if{f}
for every $\vec{b} \leq_{\mathcal{F}}\vec{a}$ and  $X \subseteq A$, there exists $\vec{c} \leq_{\mathcal{F}} \vec{b}$ such that
$\FR_{\mathcal{F}}(\vec{c})$ is either contained in or disjoint from $X$.
\end{definition}

\begin{remark}\label{0102a}
An algebra $(A, \mathcal{F})$ is Ramsey if and only if it is Ramsey below $\vec{a}$ for every $\vec{a}\in {^\omega}\!A$. Additionally, if
$(A,\mathcal{F})$ is Ramsey below $\vec{a}$, then it is Ramsey below $\vec{b}$ whenever $\vec{b}\leq_{\mathcal{F}}\vec{a}$.
\end{remark}

We now address the intimate connection between Ramsey algebras and Ramsey spaces. The definition of a Ramsey space presented in our context here mirrors
that given in \cite{tC88}. A Ramsey space is called a topological Ramsey space in \cite{sT10} and it has a slightly different axiomatization.

\begin{definition}
A \emph{preorder with approximations} is a pair
$\mathfrak{R}=(R, \leq)$ such that $R$ is a nonempty set of infinite sequences and $\leq$ is a reflexive and transitive relation on $R$. For $n \in
\omega$ and $\vec{a}\in R$, define
$$[n,\vec{a}]=\{\,\vec{b}\in R\mid \vec{b}\leq \vec{a} \text{ and }
\langle \vec{b}(0), \dotsc, \vec{b}(n-1)\rangle= \langle \vec{a}(0), \dotsc, \vec{a}(n-1)\rangle \,\}.$$
The \emph{natural topology} on $\mathfrak{R}$ is the topology generated by the sets $[n,\vec{a}]$.
\end{definition}

\begin{definition}
Suppose $\mathfrak{R}= (R, \leq)$ is a preorder with approximations. Assume $X$ is a subset of $R$. We say that $X$ is \emph{Ramsey} if{f} for every $n
\in \omega$ and $\vec{a} \in R$, there exists $\vec{b} \in [n,\vec{a}]$ such that $[n,\vec{b}]$ is either contained in or disjoint from $X$. Assuming
the Axiom of Choice, $\mathfrak{R}$ is a \emph{Ramsey space} if{f}  every subset of $\mathfrak{R}$ (endowed with the natural topology) having the
property of Baire is Ramsey.
\end{definition}

Our work concerns preorders with approximations associated to algebras.

\begin{definition}
Suppose $(A,\mathcal{F})$ is an algebra  and $\vec{a}\in {^\omega}\!A$. Define $\mathscr{R}(A,\mathcal{F})$ to be the preorder with approximations
$({^\omega}\!A, \leq_{\mathcal{F}})$ and $\mathscr{R}_{\vec{a}}(A,\mathcal{F})$
to be the preorder with approximations $(\{\, \vec{b}\in {^\omega}\!A\mid \vec{b} \leq_{\mathcal{F}}\vec{a}\,\},\leq_{\mathcal{F}})$.
\end{definition}

\begin{theorem}\label{1129b}
Suppose $\mathcal{F}$ is a finite collection of operations on a set $A$, none of which is unary, and $\vec{a}\in {^\omega}\!A$. Then
\begin{enumerate}
\item $\mathscr{R}(A, \mathcal{F})$ is a Ramsey space if and only if $(A, \mathcal{F})$  is a Ramsey algebra;
\item $\mathscr{R}_{\vec{a}}(A, \mathcal{F})$ is a Ramsey space if and only if $(A, \mathcal{F})$  is Ramsey below $\vec{a}$.
\end{enumerate}
\end{theorem}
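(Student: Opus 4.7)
The plan is to reduce both equivalences to Carlson's abstract Ellentuck theorem from \cite{tC88}, which characterizes Ramsey spaces among preorders with approximations by means of a one-level combinatorial pigeonhole principle once certain structural axioms are verified. The argument then splits into two tasks: checking Carlson's axioms for $\mathscr{R}(A,\mathcal{F})$ and for its restriction $\mathscr{R}_{\vec{a}}(A,\mathcal{F})$, and identifying the resulting pigeonhole condition with the Ramsey algebra (respectively Ramsey-below-$\vec{a}$) property.

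For the axiom verification, reflexivity and transitivity of $\leq_{\mathcal{F}}$ are already recorded, transitivity coming from substituting admissible sequences of orderly terms into one another. The remaining axioms are of amalgamation- and finitization-type: the value of $\vec{b}(i)$ when $\vec{b}\leq_{\mathcal{F}}\vec{a}$ depends only on finitely many entries of $\vec{a}$ (namely those indexed by variables occurring in $\vec{t}(i)$), and suitable ``mixed'' reductions agreeing with a prescribed initial segment can always be constructed. This is exactly where the hypotheses \emph{$\mathcal{F}$ finite} and \emph{no unary operations} are used. Since every operation has arity at least two, an orderly term with $n$ variable occurrences has at most $n-1$ internal nodes; together with finiteness of $\mathcal{F}$, this gives that the set of orderly terms using only variables of index at most $k$ is finite, which supplies the compactness/finite-branching needed for Carlson's axioms. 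All of this passes to $\mathscr{R}_{\vec{a}}(A,\mathcal{F})$ because the set $\{\vec{b}:\vec{b}\leq_{\mathcal{F}}\vec{a}\}$ is closed under $\leq_{\mathcal{F}}$ by transitivity, so inherits every axiom from the ambient preorder.

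For the identification of the pigeonhole condition, the crucial observation is that $\FR_{\mathcal{F}}(\vec{b})=\{\vec{c}(0):\vec{c}\leq_{\mathcal{F}}\vec{b}\}$. Indeed, given any $t\in\OT(\mathcal{L}_{\mathcal{F}})$ one can extend $t$ to an admissible sequence $\vec{t}$ with initial term $t$, and the definition $\vec{c}(i)=(\vec{t}(i))^{\mathfrak{A}}[\vec{b}]$ produces $\vec{c}\leq_{\mathcal{F}}\vec{b}$ with $\vec{c}(0)=t^{\mathfrak{A}}[\vec{b}]$; the reverse inclusion is immediate from Definition~\ref{0524c}. Consequently, applying Carlson's pigeonhole principle to the partition $A=X\sqcup(A\setminus X)$ states exactly that for every starting sequence there is a $\leq_{\mathcal{F}}$-reduction whose $\FR_{\mathcal{F}}$ lies in a single cell. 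This matches the definitions of Ramsey algebra and of Ramsey below $\vec{a}$ verbatim, yielding both (1) and (2).

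The main obstacle is the faithful translation of Carlson's axiomatic framework into the language of $\leq_{\mathcal{F}}$ and $\OT(\mathcal{L}_{\mathcal{F}})$: one must be careful in the finitization axiom to avoid circularity (the point being that each entry of a reduction is determined by a finite computation on $\vec{a}$, not by the Ramsey property itself) and must verify that "no unary operations" is invoked precisely where the compactness of the set of orderly terms bounded in variable index is needed. Once these axioms are in place, the theorem follows directly from \cite{tC88}.
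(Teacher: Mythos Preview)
Your proposal is correct and follows essentially the same route the paper indicates: the paper does not give a self-contained proof but cites Lemma~4.14 of \cite{tC88}, noting that $\mathscr{R}(A,\mathcal{F})$ was shown there to satisfy the hypotheses of the abstract Ellentuck theorem, and that part~(2) is proved analogously. Your outline---verifying Carlson's axioms (using finiteness of $\mathcal{F}$ and the absence of unary operations to get finitely many orderly terms of bounded variable index) and then identifying the one-level pigeonhole condition with the Ramsey algebra property via $\FR_{\mathcal{F}}(\vec{b})=\{\vec{c}(0):\vec{c}\leq_{\mathcal{F}}\vec{b}\}$---is exactly this argument made explicit.
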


Theorem~\ref{1129b}\textit{(1)} is a version of Lemma~4.14 in \cite{tC88} using the notion of Ramsey algebra. In fact, $\mathscr{R}(A, \mathcal{F})$ was
shown in \cite{tC88} to satisfy the assumptions for the abstract version of Ellentuck's Theorem that play a key role. As Theorem~\ref{1129b}\textit{(2)} can be proved analogously, it is first stated in \cite{wcT13} without proof.
Furthermore, Theorem~\ref{1129b}\textit{(1)} is strengthen in \cite{wcT13} to allow for the collection of operations to be appended by any collection of
unary operations. As of this writing, the author claims that the same conclusion holds provided the underlying set of the algebra is countable. However, it remains unclear
whether there exists a Ramsey algebra such that the corresponding space is not Ramsey.

\begin{example}
The empty algebra $(\omega,\emptyset)$ is Ramsey precisely due to the pigeonhole principle. Hence, $\mathscr{R}(\omega, \emptyset)$ is a Ramsey space.
Identifying infinite subsets $A$ and $B$ of $\omega$ with strictly increasing sequences of natural numbers, we have $A\subseteq B$ if and only if
$A\leq_\emptyset B$. Therefore, the Ellentuck space is actually isomorphic to
the subspace of $\mathscr{R}(\omega, \emptyset)$ induced by the set $\{\, A\in {^\omega}\omega\mid A \text{ is strictly inceasing}   \,\}$.
This set is the basic open set $[0,\langle 0,1,2,\dotsc\rangle]$ in the natural topology on $\mathscr{R}(\omega, \emptyset)$ and as such induces a
Ramsey subspace.
\end{example}

\begin{example}
Suppose $L$ is a finite alphabet and $v$ is  a distinct variable not contained in $L$.
A \emph{variable word over $L$} is a finite sequence $w$ of elements of $L \cup \{v\}$ such that the variable $v$ occurs at least once in $w$.
Denote the set of variable words over $L$ by $W$. If $w\in W$ and $a\in L$, then $w(a)$ is the result of replacing every occurrence of $v$ in $w$ by
$a$. The concatenation of two variable words $w$ and $w'$ is denoted by $w \ast w'$.
Let $\mathcal{F}$ consist of $\ast$ and the following binary operations on $W$:
$$(w,w')\mapsto w\ast w'(a) \quad , \quad a\in L;$$
$$(w,w')\mapsto w(a)\ast w' \quad , \quad a\in L.$$
$\mathscr{R}(W,\mathcal{F})$ is a prototype of Ramsey spaces of variable words \cite{tC88}.
 \end{example}


Hindman's Theorem \cite{nH74} implies that $(\mathbb{N}, +)$ is a Ramsey algebra. In fact, the generalization in the next theorem is essentially due to
Hindman's Theorem  (see \cite[Section 5.2]{HS12}). Alternatively,
it follows from the fact that the preorder with approximations associated to a semigroup is a Ramsey space (Theorem~6 in \cite{tC88}).

\begin{theorem}\label{050613a}
Every semigroup is a Ramsey algebra.
\end{theorem}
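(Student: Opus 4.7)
The plan is to reduce the statement to Hindman's Finite Products Theorem for semigroups. Let $(A,\cdot)$ be a semigroup, $\vec{a}\in{}^{\omega}\!A$, and $X\subseteq A$. The first step is to translate the paper's syntactic formalism of orderly terms, admissible sequences, reductions, and finite reductions into the classical language of in-order finite products and product subsystems.

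The key observation is that, by associativity, every orderly term of $\mathcal{L}_{\{\cdot\}}$ whose variables (in left-to-right order) have indices $i_1<\dotsb<i_n$ evaluates, under any assignment $\vec{c}$, to the element $\vec{c}(i_1)\cdots\vec{c}(i_n)$, independently of how the term is parenthesized. Identifying each orderly term with the finite nonempty set of its variable indices, I then read off two facts. First, $\FR_{\{\cdot\}}(\vec{b})$ coincides with the set $\FP(\vec{b})$ of all finite products $\vec{b}(j_1)\cdots\vec{b}(j_m)$ with $j_1<\dotsb<j_m$. Second, $\vec{b}\leq_{\{\cdot\}}\vec{a}$ holds iff there exist nonempty finite sets $B_0,B_1,\dotsc\subseteq\omega$ with $\max B_i<\min B_{i+1}$ and $\vec{b}(i)=\prod_{k\in B_i}\vec{a}(k)$ (the product taken in increasing order of $k$); that is, $\vec{b}$ is a \emph{product subsystem} of $\vec{a}$ in the standard semigroup-theoretic sense.

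With these identifications, the desired conclusion becomes precisely the statement that for the $2$-partition $\{X,A\setminus X\}$ of $A$ and for the sequence $\vec{a}$, there is a product subsystem $\vec{b}$ of $\vec{a}$ such that $\FP(\vec{b})$ lies entirely in one cell. This is exactly Hindman's Finite Products Theorem for arbitrary semigroups, as developed in \cite[Section~5.2]{HS12}; invoking it closes the argument.

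The only real obstacle is the bookkeeping in the translation step: one has to check carefully that the admissibility condition on $\vec{t}\in\Adm(\mathcal{L}_{\{\cdot\}})$ matches the block condition $\max B_i<\min B_{i+1}$ on the induced index sets, and that orderly terms truly exhaust all in-order products (which uses associativity and the absence of unary or nullary operations). Once this dictionary is in place, no further combinatorial work is needed, since Hindman's theorem supplies all the substance. Alternatively, one could prove the statement without citing Hindman by appealing to Theorem~6 of \cite{tC88} (which asserts that the preorder with approximations associated to a semigroup is a Ramsey space) together with Theorem~\ref{1129b}\textit{(1)}; but the direct reduction above is more transparent.
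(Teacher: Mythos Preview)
Your proposal is correct and mirrors the paper's own treatment: the paper does not give a standalone proof but simply observes that the result is essentially Hindman's Finite Products Theorem for semigroups (citing \cite[Section~5.2]{HS12}) and notes the alternative route via Theorem~6 of \cite{tC88} together with Theorem~\ref{1129b}\textit{(1)}. You have merely made explicit the dictionary between orderly terms/reductions and finite products/product subsystems that the paper leaves implicit, so the approaches coincide.
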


\begin{remark}\label{Moufang}
Structures that satisfy the Moufang identities are very close to being semigroups.
For example, under octonion multiplication, the nonzero octonions form a Moufang loop that is nonassociative.
Nevertheless, this Moufang loop is not a Ramsey algebra (see \cite{rdz16}).
\end{remark}

If $\mathscr{R}(A,\mathcal{F})$ is a Ramsey space, then $(A,\mathcal{F})$ is trivially a Ramsey algebra because every subset of $A$ induces a subset of
$^{\omega}\!A$ that is clopen in the natural topology. In fact, the strength of being a Ramsey space bestows more Ramsey type property on the associated
algebra, which is the reason behind the following analogue of the Milliken-Taylor Theorem \cite{kM75,aT76}.

\begin{theorem}\label{0910a}
Suppose $\mathcal{F}$ is a finite collection of operations on a set $A$, none of which is unary, $\vec{a}\in {^\omega}\!A$, and $n\in \omega$. If
$\mathfrak{A}=(A, \mathcal{F})$  is Ramsey below $\vec{a}$, then for every $\vec{b}\leq_{\mathcal{F}}\vec{a}$ and $X\subseteq A^n$, there exists
$\vec{c}\leq_{\mathcal{F}}\vec{b}$ such that
$[\FR_{\mathcal{F}}(\vec{c})]^n_<$ is either contained in or disjoint from $X$, where
$$[\FR_{\mathcal{F}}(\vec{c})   ]^n_<= \{\,(t_1^{\mathfrak{A}}[\vec{c}],\dotsc,t_n^{\mathfrak{A}}[\vec{c}])  \mid t_1,\dotsc,t_n \in
\OT(\mathcal{L}_{\mathcal{F}})\text{ with } t_1<\dotsb<t_n   \,  \}.$$
\end{theorem}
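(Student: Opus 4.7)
The plan is to reduce the statement to the Ramsey property of a single clopen subset of $\mathscr{R}_{\vec{a}}(A,\mathcal{F})$. Since $\mathcal{F}$ is finite with no unary operation and $(A,\mathcal{F})$ is Ramsey below $\vec{a}$, Theorem~\ref{1129b}\textit{(2)} guarantees that $\mathscr{R}_{\vec{a}}(A,\mathcal{F})$ is a Ramsey space; in particular, every clopen subset has the Baire property and is therefore Ramsey.

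Given $\vec{b}\leq_{\mathcal{F}}\vec{a}$ and $X\subseteq A^n$, I would introduce the cylindric set
$$Y=\{\,\vec{d}\in {^\omega}\!A\mid \vec{d}\leq_{\mathcal{F}}\vec{a}\text{ and }(\vec{d}(0),\dotsc,\vec{d}(n-1))\in X\,\}.$$
Because membership in $Y$ depends only on the first $n$ coordinates of $\vec{d}$, both $Y$ and its complement in $\mathscr{R}_{\vec{a}}$ are unions of basic open sets of the form $[n,\vec{d}]$, so $Y$ is clopen. Applying the Ramsey conclusion with approximation $0$ to $Y$ and to the given $\vec{b}$ then yields $\vec{c}\in[0,\vec{b}]$, that is, $\vec{c}\leq_{\mathcal{F}}\vec{b}$, such that $[0,\vec{c}]$ is either contained in $Y$ or disjoint from $Y$.

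To finish, I would observe that as $\vec{d}$ ranges over $[0,\vec{c}]$, i.e., over the reductions of $\vec{c}$, the initial tuple $(\vec{d}(0),\dotsc,\vec{d}(n-1))$ ranges over precisely $[\FR_{\mathcal{F}}(\vec{c})]^n_<$: one inclusion is immediate from Definition~\ref{0524c}, and for the other, any orderly terms $t_1<\dotsb<t_n$ extend to an admissible infinite sequence by appending $v_M,v_{M+1},\dotsc$ for $M$ larger than any variable index occurring in $t_n$, producing a reduction of $\vec{c}$ whose first $n$ entries are $t_1^{\mathfrak{A}}[\vec{c}],\dotsc,t_n^{\mathfrak{A}}[\vec{c}]$. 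Consequently $[0,\vec{c}]\subseteq Y$ translates into $[\FR_{\mathcal{F}}(\vec{c})]^n_<\subseteq X$, and $[0,\vec{c}]\cap Y=\emptyset$ into $[\FR_{\mathcal{F}}(\vec{c})]^n_<\cap X=\emptyset$. The argument is essentially a translation between the algebraic and topological viewpoints, with all of the combinatorial content packed into Theorem~\ref{1129b}\textit{(2)}, which is invoked as a black box; apart from that, the only thing to get right is the choice of the cylindric set $Y$ and the verification of its clopenness.
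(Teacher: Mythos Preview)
Your proposal is correct and follows essentially the same route as the paper's proof: both invoke Theorem~\ref{1129b}\textit{(2)} to pass to the Ramsey space $\mathscr{R}_{\vec{a}}(A,\mathcal{F})$, define the same cylindric set $Y$, apply the Ramsey property at approximation level $0$ starting from $\vec{b}$, and finish by extending any $n$-tuple from $[\FR_{\mathcal{F}}(\vec{c})]^n_<$ to a full reduction of $\vec{c}$. The only cosmetic difference is that you observe $Y$ is clopen whereas the paper records merely that $Y$ is open; either suffices for the Baire property.
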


\begin{proof}
Let $R=\{\, \vec{b}\in {^\omega}\!A\mid \vec{b} \leq_{\mathcal{F}}\vec{a}\,\}$.   By Theorem~\ref{1129b}, $\mathscr{R}_{\vec{a}}(A,
\mathcal{F})=(R,\leq_{\mathcal{F}})$ is a Ramsey space.

Fix $\vec{b}\leq_{\mathcal{F}}\vec{a}$ and $X\subseteq A^n$. Let
$$Y=\{\, \vec{d}\in R\mid (\vec{d}(0), \vec{d}(1), \dotsc, \vec{d}(n-1)) \in X                \,\}.$$
Clearly, $[n,\vec{d}]\subseteq Y$ for every $\vec{d}\in Y$. Hence,
$Y=\bigcup_{\vec{d}\in Y} [n,\vec{d}]$ is open in the natural topology on $\mathscr{R}_{\vec{a}}(A, \mathcal{F})$.
Thus $Y$ has the property of Baire and so is Ramsey because $\mathscr{R}_{\vec{a}}(A, \mathcal{F})$ is Ramsey.
Therefore,  we can choose $\vec{c}\leq_{\mathcal{F}}\vec{b}$ such that $[0,\vec{c}]=
\{\,\vec{d}\in R\mid \vec{d}\leq_{\mathcal{F}}\vec{c}   \,\}$ is either contained in or disjoint from $Y$.

Suppose $(d_0, d_1, \dotsc, d_{n-1})\in [\FR_{\mathcal{F}}(\vec{c})]^n_<$ is arbitrary. By the definition of a reduction, $\langle d_0,d_1, \dotsc,
d_{n-1} \rangle$ can be extended easily to some reduction $\vec{d}$ of $\vec{c}$, meaning $\vec{d}\in [0,\vec{c}]$.
Note that $\vec{d}\in Y$ if and only if $(d_0, d_1, \dotsc, d_{n-1})\in X$. It follows that
$[\FR_{\mathcal{F}}(\vec{c})]^n_<$ is either contained in or disjoint from $X$.
\end{proof}

\section{Orderly Algebras and Their Reductions}
We now introduce the notion of orderly algebra. Its naturality and relevance to the study of Ramsey algebras will become clear by the time we reach
Theorem~\ref{1908a}.

\begin{definition}\label{1511a}
Suppose that $\mathcal{L}$ is a language. An \emph{orderly $\mathcal{L}$-algebra} is a function $\mathscr{A}$ with domain $\OT(\mathcal{L})$ such that
for each  $f\in \mathcal{L}$ the following holds: if $f$ is $n$-ary, then $\mathscr{A}(ft_1t_2\dotsm t_n)=\mathscr{A}(ft_1't_2'\dotsm t_n')$ whenever
$t_1,t_2,\dotsc, t_n,t_1',t_2', \dotsc,t_n'$ are orderly terms of $\mathcal{L}$ with $t_1<t_2<\dotsb<t_n$ and $t_1'<t_2'<\dotsb<t_n'$ such that
$\mathscr{A}(t_k)=\mathscr{A}(t_k')$ for $1\leq k\leq n$.
The range of $\mathscr{A}$, denoted $\Vert \mathscr{A}\Vert$, is called the \emph{universe} of $\mathscr{A}$.
\end{definition}

\begin{example}\label{constantexample}
Suppose that $\mathcal{L}$ is a language. If $\mathscr{A}$ is any constant function with domain $\OT(\mathcal{L})$, then $\mathscr{A}$ is what we call a
\emph{trivial orderly $\mathcal{L}$-algebra}.
\end{example}

\begin{example}\label{indexexample}
Suppose that $\mathcal{L}$ is a language. Let $\mathscr{A}(t)= \{ i \in \omega\mid v_i \text{ appears in } t \}$
for all $t\in\OT(\mathcal{L})$. Then $\mathscr{A}$ is an orderly $\mathcal{L}$-algebra.
\end{example}

Examples \ref{constantexample} and \ref{indexexample} are arbitrary in the sense that no particular algebra has played a role in defining the values of
$\mathscr{A}$ in either case. Since we are interested in algebras, we would like to be able to obtain an orderly $\mathcal{L}$-algebra from a given
algebra. This is what we do in the next definition.

\begin{definition}
Suppose $\mathcal{L}$ is a language, $\mathfrak{A}$ is an $\mathcal{L}$-algebra and $\vec{a}\in {^\omega}\Vert \mathfrak{A}\Vert$.
The \emph{orderly $\mathcal{L}$-algebra induced from $\mathfrak{A}$ by $\vec{a}$}, denoted $\mathfrak{A}_{\vec{a}}$, is defined by
$$\mathfrak{A}_{\vec{a}}(t)=t^{\mathfrak{A}}[\vec{a}]\;\text{for all}\;t\in \OT(\mathcal{L}).$$
\end{definition}

The fact that $\mathfrak{A}_{\vec{a}}$ is well-defined follows from the semantics of first order logic, namely
$(ft_1t_2\dotsm t_n)^{\mathfrak{A}}[\vec{a}]=f^{\mathfrak{A}}(t_1^{\mathfrak{A}}[\vec{a}], \dotsc, t_n^{\mathfrak{A}}[\vec{a}])$
whenever $f\in \mathcal{L}$, say $n$-ary, and $t_1, \dotsc, t_n$ are terms of $\mathcal{L}$.

Note that if $\mathfrak{A}=(A,\mathcal{F})$ is an $\mathcal{L}$-algebra and $\vec{a}\in {^\omega}\!A$, then $\Vert \mathfrak{A}_{\vec{a}}\Vert=
\FR_{\mathcal{F}}(\vec{a})$. 

\begin{example}\label{1211b}
Suppose $\mathcal{L}=\{f\}$, where $f$ is binary. An orderly $\mathcal{L}$-algebra $\mathscr{A}$  is an \emph{orderly semigroup}
if{f} $ \mathscr{A}(ff t_1t_2t_3)=\mathscr{A}(ft_1f  t_2t_3)$
for every $t_1,t_2,t_3 \in \OT( \mathcal{L}  )$ with $t_1 < t_2 < t_3$.
If $\mathscr{A}$ is an orderly semigroup, then in fact $\mathscr{A}(s)=\mathscr{A}(t)$ whenever the same variables occur in $s$ and $t$ (essentially
Lemma~4.2 in \cite{wcT13a}). Every orderly $\mathcal{L}$-algebra induced from a semigroup is an orderly semigroup.
\end{example}

It will be shown now that, conversely, every orderly $\mathcal{L}$-algebra is induced from some $\mathcal{L}$-algebra.

Suppose that $\mathcal{L}$ is a language and $\mathscr{A}$ is an orderly $\mathcal{L}$-algebra.
For each fixed $f\in \mathcal{L}$, say $f$ is $n$-ary, the map
$$(\mathscr{A}(t_1), \dotsc, \mathscr{A}(t_n)) \mapsto \mathscr{A}( ft_1\dotsm t_n) \quad,\quad t_1<t_2<\dotsb<t_n $$
is a well-defined $n$-ary partial operation
on $\Vert \mathscr{A}\Vert$. Extend this map arbitrarily to an $n$-ary operation on $\Vert \mathscr{A}\Vert$, denoted by $f^*$.

Let $\mathfrak{A}=(\Vert \mathscr{A}\Vert, \{ f^*\}_{f\in \mathcal{L}})$ and $\vec{a}=\langle \mathscr{A}(v_0), \mathscr{A}(v_1),
\mathscr{A}(v_2), \dotsb\rangle$.
Then $\mathfrak{A}$ is canonically an $\mathcal{L}$-algebra with $f^{\mathfrak{A}}=f^*$ for each $f\in \mathcal{L}$.
We claim that $\mathfrak{A}_{\vec{a}}=\mathscr{A}$.
To see this, we argue by induction on the complexity of orderly terms that $\mathfrak{A}_{\vec{a}}(t)=\mathscr{A}(t)$ for all $t\in \OT(\mathcal{L})$.
For each variable $v_i$, we have $\mathfrak{A}_{\vec{a}}(v_i)=v_i^{\mathfrak{A}}[\vec{a}]=\vec{a}(i)=\mathscr{A}(v_i)$.
Now, assume $t=ft_1t_2\dotsm t_n$ for some $n$-ary $f\in \mathcal{L}$ and orderly terms $t_1<t_2<\dotsb< t_n$.
By the induction hypothesis, $\mathfrak{A}_{\vec{a}}(t_i)= t_i^{\mathfrak{A}}[\vec{a}]= \mathscr{A}(t_i)$ for each $1\leq i\leq n$.
By definition,  $\mathfrak{A}_{\vec{a}}(t)=t^{\mathfrak{A}}[\vec{a}]= f^{\mathfrak{A}}(t_1^{\mathfrak{A}}[\vec{a}], t_2^{\mathfrak{A}}[\vec{a}], \dotsc,
t_n^{\mathfrak{A}}[\vec{a}])=  f^\ast(\mathscr{A}(t_1), \mathscr{A}(t_2),   \dotsc, \mathscr{A}(t_n))
=\mathscr{A}(ft_1t_2\dotsm t_n)=\mathscr{A}(t)$. Hence, we have proved the following theorem.

\begin{theorem}\label{0310a}
Suppose that $\mathcal{L}$ is a language and $\mathscr{A}$ is an orderly $\mathcal{L}$-algebra. Then there exists an $\mathcal{L}$-algebra
$\mathfrak{A}$ with universe $\Vert \mathscr{A}\Vert$ such that $\mathscr{A}$ is induced from $\mathfrak{A}$.
\end{theorem}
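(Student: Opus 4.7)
The plan is to reverse-engineer the construction of $\mathfrak{A}_{\vec{a}}$: the universe of $\mathfrak{A}$ must be $\Vert \mathscr{A}\Vert$, the assignment must recover the variables, so $\vec{a}=\langle \mathscr{A}(v_0),\mathscr{A}(v_1),\dotsc\rangle$, and each $f^{\mathfrak{A}}$ has to be chosen so that applying it to the $\mathscr{A}$-values of orderly terms reproduces the $\mathscr{A}$-value of the composite term. Once these three choices are pinned down, everything else is forced.

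Concretely, for each $n$-ary $f\in \mathcal{L}$ I would first define a partial $n$-ary operation $f^\ast$ on $\Vert \mathscr{A}\Vert$ by declaring $f^\ast(\mathscr{A}(t_1),\dotsc,\mathscr{A}(t_n)) = \mathscr{A}(ft_1\dotsm t_n)$ whenever $t_1<\dotsb<t_n$ are orderly terms. The key point is well-definedness: if two increasing tuples $(t_1,\dotsc,t_n)$ and $(t_1',\dotsc,t_n')$ of orderly terms produce the same tuple of $\mathscr{A}$-values coordinate by coordinate, then $\mathscr{A}(ft_1\dotsm t_n)$ had better equal $\mathscr{A}(ft_1'\dotsm t_n')$. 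But this is \emph{exactly} the defining clause of an orderly $\mathcal{L}$-algebra in Definition~\ref{1511a}, so well-definedness is handed to us. I would then extend $f^\ast$ arbitrarily to a total $n$-ary operation $f^{\mathfrak{A}}$ on $\Vert \mathscr{A}\Vert$; the extension is immaterial because the subsequent verification never evaluates $f^{\mathfrak{A}}$ outside the partial domain already controlled by $\mathscr{A}$.

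The last step is to confirm $\mathfrak{A}_{\vec{a}}=\mathscr{A}$ by induction on the complexity of an orderly term. For a variable $v_i$ we have $\mathfrak{A}_{\vec{a}}(v_i)=v_i^{\mathfrak{A}}[\vec{a}]=\vec{a}(i)=\mathscr{A}(v_i)$. For $t=ft_1\dotsm t_n$ with $t_1<\dotsb<t_n$ orderly, the induction hypothesis gives $\mathfrak{A}_{\vec{a}}(t_k)=\mathscr{A}(t_k)$, and the first-order semantics together with our definition of $f^{\mathfrak{A}}$ on the relevant tuple chain through to yield $\mathfrak{A}_{\vec{a}}(t)=\mathscr{A}(ft_1\dotsm t_n)$.

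The only substantive obstacle in this plan is the well-definedness check for the partial operation $f^\ast$, and the orderly-algebra axiom was tailor-made to dispose of it. In that sense the theorem is nearly tautological once the definitions are arranged correctly, which is itself the conceptual point: orderly algebras are precisely the abstract shadows of the concrete objects $\mathfrak{A}_{\vec{a}}$, no more and no less.
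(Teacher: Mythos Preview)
Your proposal is correct and follows essentially the same argument as the paper: define the partial operation $f^\ast$ by $(\mathscr{A}(t_1),\dotsc,\mathscr{A}(t_n))\mapsto\mathscr{A}(ft_1\dotsm t_n)$, note that Definition~\ref{1511a} guarantees well-definedness, extend arbitrarily to a total operation, set $\vec{a}(i)=\mathscr{A}(v_i)$, and verify $\mathfrak{A}_{\vec{a}}=\mathscr{A}$ by induction on term complexity. The paper carries out exactly these steps in the paragraph preceding the theorem statement.
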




\begin{definition}
Suppose $\mathcal{L}$ is a language, $s\in \OT(\mathcal{L})$, and $\vec{t}  \in \Adm(\mathcal{L})  $.
Define $s[\vec{t}]$ to be the orderly term of $\mathcal{L}$ obtained by replacing each variable $v_i$ occurring in $s$ by $\vec{t}(i)$.
\end{definition}

\begin{definition}\label{2708a}
Suppose $\mathcal{L}$ is a language and suppose $\mathscr{A}$ and  $\mathscr{B}$ are orderly $\mathcal{L}$-algebras.
We say that $\mathscr{B}$ is a \emph{reduction} of $\mathscr{A}$ if{f} there exists $\vec{t}  \in \Adm(\mathcal{L})  $ such that
for every  $s\in \OT(\mathcal{L})$,
$$\mathscr{B}(s)=\mathscr{A}(s[\vec{t}]) .$$
We say that $\mathscr{B}$ is a reduction of $\mathscr{A}$ \emph{witnessed} by $\vec{t}$. 
\end{definition}

\begin{remark}\label{2009a}
\begin{enumerate}
\item  If $\mathscr{B}$ is a reduction of $\mathscr{A}$, then $\Vert \mathscr{B}\Vert \subseteq \Vert \mathscr{A}\Vert$.
\item  If $\mathscr{B}$ is a reduction of $\mathscr{A}$ and $\mathscr{C}$ is a reduction of $\mathscr{B}$, then $\mathscr{C}$ is a reduction of
    $\mathscr{A}$.
\end{enumerate}
\end{remark}

The choice of the term ``reduction" is justified by the following proposition.

\begin{proposition}\label{2008a}
Suppose $\mathcal{L}$ is a language, $\mathfrak{A}=(A,\mathcal{F})$ is an $\mathcal{L}$-algebra, and $\vec{a}\in {^\omega}\!A$. Then
\begin{enumerate}
\item $\vec{b} \leq_{\mathcal{F}}\vec{a}$ if and only if $\mathfrak{A}_{\vec{b}}$ is a reduction of $\mathfrak{A}_{\vec{a}}$ for each $\vec{b}\in
    {^\omega}\!A$;
\item if $\mathscr{B}$ is a reduction of $\mathfrak{A}_{\vec{a}}$ and
 $\vec{b}(i)=\mathscr{B}(v_i)$ for all $i\in \omega$, then $\vec{b}$ is a reduction of $\vec{a}$ and $\mathscr{B}=\mathfrak{A}_{\vec{b}}$.
\end{enumerate}
\end{proposition}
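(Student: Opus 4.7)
The plan is to derive both statements from a single substitution lemma: if $\vec{a},\vec{b}\in {^\omega}\!A$ and $\vec{t}\in \Adm(\mathcal{L}_{\mathcal{F}})$ satisfy $\vec{b}(i)=(\vec{t}(i))^{\mathfrak{A}}[\vec{a}]$ for all $i\in\omega$, then
\[ s^{\mathfrak{A}}[\vec{b}] \;=\; (s[\vec{t}])^{\mathfrak{A}}[\vec{a}] \quad \text{for every } s\in \OT(\mathcal{L}_{\mathcal{F}}). \]
I would prove this by a straightforward induction on the complexity of $s$: when $s=v_i$, both sides equal $(\vec{t}(i))^{\mathfrak{A}}[\vec{a}]$; when $s=ft_1\dotsm t_n$ with $t_1<\dotsb<t_n$, both sides unfold, via the clause for $f^{\mathfrak{A}}$ and the inductive hypothesis, to $f^{\mathfrak{A}}\bigl((t_1[\vec{t}])^{\mathfrak{A}}[\vec{a}],\dotsc,(t_n[\vec{t}])^{\mathfrak{A}}[\vec{a}]\bigr)$. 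The only nontrivial bookkeeping is to note that $s[\vec{t}]$ is itself orderly, which is immediate from $s\in\OT(\mathcal{L}_{\mathcal{F}})$ together with the admissibility of $\vec{t}$.

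Granted this lemma, part \textit{(1)} follows quickly. For the forward direction, assume $\vec{b}\leq_{\mathcal{F}}\vec{a}$ witnessed by $\vec{t}$. Then for every $s\in\OT(\mathcal{L}_{\mathcal{F}})$,
\[ \mathfrak{A}_{\vec{b}}(s)=s^{\mathfrak{A}}[\vec{b}]=(s[\vec{t}])^{\mathfrak{A}}[\vec{a}]=\mathfrak{A}_{\vec{a}}(s[\vec{t}]), \]
so $\mathfrak{A}_{\vec{b}}$ is a reduction of $\mathfrak{A}_{\vec{a}}$ witnessed by the same $\vec{t}$. Conversely, if $\mathfrak{A}_{\vec{b}}$ is a reduction of $\mathfrak{A}_{\vec{a}}$ witnessed by $\vec{t}$, then specializing the defining equation to $s=v_i$ gives $\vec{b}(i)=\mathfrak{A}_{\vec{b}}(v_i)=\mathfrak{A}_{\vec{a}}(\vec{t}(i))=(\vec{t}(i))^{\mathfrak{A}}[\vec{a}]$, which exhibits $\vec{t}$ as a witness to $\vec{b}\leq_{\mathcal{F}}\vec{a}$.

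For part \textit{(2)}, suppose $\mathscr{B}$ is a reduction of $\mathfrak{A}_{\vec{a}}$ witnessed by $\vec{t}$ and $\vec{b}(i)=\mathscr{B}(v_i)$ for all $i$. Then $\vec{b}(i)=\mathscr{B}(v_i)=\mathfrak{A}_{\vec{a}}(v_i[\vec{t}])=(\vec{t}(i))^{\mathfrak{A}}[\vec{a}]$, so $\vec{b}$ is a reduction of $\vec{a}$ witnessed by $\vec{t}$. To conclude $\mathscr{B}=\mathfrak{A}_{\vec{b}}$, I would invoke the substitution lemma once more: for each $s\in\OT(\mathcal{L}_{\mathcal{F}})$,
\[ \mathscr{B}(s)=\mathfrak{A}_{\vec{a}}(s[\vec{t}])=(s[\vec{t}])^{\mathfrak{A}}[\vec{a}]=s^{\mathfrak{A}}[\vec{b}]=\mathfrak{A}_{\vec{b}}(s). \]

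There is no real obstacle here; the whole argument is syntactic bookkeeping around the standard term-substitution lemma of first-order logic, with the sole extra check being preservation of orderliness under substitution by an admissible sequence.
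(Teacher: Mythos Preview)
Your proof is correct and essentially identical to the paper's: both pivot on the same substitution identity $s^{\mathfrak{A}}[\vec{b}]=(s[\vec{t}])^{\mathfrak{A}}[\vec{a}]$ (which the paper simply cites as ``the substitution lemma''), then specialize to $s=v_i$ for the converse of \textit{(1)} and the first half of \textit{(2)}, and use the full identity for the remaining implications. Your write-up is in fact slightly more explicit, since you spell out the induction for the substitution lemma and flag the orderliness of $s[\vec{t}]$, both of which the paper leaves tacit.
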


\begin{proof}
For the first part, fix $\vec{b}\in {^\omega}\!A$. Assume  $\vec{b} \leq_{\mathcal{F}}\vec{a}$. Then choose $\vec{t}\in \Adm(\mathcal{L}_{\mathcal{F}})$
such that
$\vec{b}(i)=(\vec{t}(i))^{\mathfrak{A}}[\vec{a}]$ for all $i\in \omega$.
For every $s\in \OT(\mathcal{L}_{\mathcal{F}})$, by the substitution lemma,
$\mathfrak{A}_{\vec{b}}(s)=s^{\mathfrak{A}}[\vec{b}]=(s[\vec{t}])^{\mathfrak{A}}[\vec{a}]=\mathfrak{A}_{\vec{a}}(s[\vec{t}] )$.
Thus $\mathfrak{A}_{\vec{b}}$ is a reduction of
$\mathfrak{A}_{\vec{a}}$ witnessed by $\vec{t}$.
Conversely, assume $\mathfrak{A}_{\vec{b}}$ is a reduction of $\mathfrak{A}_{\vec{a}}$ witnessed by some $\vec{t}\in \Adm(\mathcal{L}_{\mathcal{F}})$.
Then
$\vec{b}(i)=v_i^{\mathfrak{A}}[\vec{b}]=\mathfrak{A}_{\vec{b}}(v_i)=\mathfrak{A}_{\vec{a}}(\vec{t}(i)) =  (\vec{t}(i))^{\mathfrak{A}}[\vec{a}]$ for all
$i\in \omega$. Therefore, $\vec{b}$ is a reduction of $\vec{a}$.

For the second part, suppose $\mathscr{B}$ is a reduction of $\mathfrak{A}_{\vec{a}}$ witnessed by some $\vec{t}\in \Adm(\mathcal{L}_{\mathcal{F}})$ and
$\vec{b}(i)=\mathscr{B}(v_i)$ for all $i\in \omega$.
Then $\vec{b}(i)= \mathfrak{A}_{\vec{a}}(\vec{t}(i))= ( \vec{t}(i))^{\mathfrak{A}}[\vec{a}]$ for all $i\in \omega$, implying that $\vec{b}$ is a
reduction of $\vec{a}$. Now, for every $s\in \OT(\mathcal{L}_{\mathcal{F}})$,
$\mathscr{B}(s)=\mathfrak{A}_{\vec{a}}(s[\vec{t}] )=(s[\vec{t}])^{\mathfrak{A}}[\vec{a}]
=s^{\mathfrak{A}}[\vec{b}]=\mathfrak{A}_{\vec{b}}(s)$. Therefore, $\mathscr{B}= \mathfrak{A}_{\vec{b}}$.
\end{proof}


\begin{corollary}\label{0310c}
Suppose $\mathcal{L}$ is a language and $\mathscr{A}$ is an orderly $\mathcal{L}$-algebra. If $\mathscr{B}$ and $\mathscr{C}$ are reductions of
$\mathscr{A}$ such that $\mathscr{B}(v_i)=\mathscr{C}(v_i) $ for all $i\in \omega$, then $\mathscr{B}=\mathscr{C}$.
\end{corollary}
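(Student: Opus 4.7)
The plan is to reduce the corollary to Proposition~\ref{2008a}(2) by first realizing the abstract orderly algebra $\mathscr{A}$ as one that is induced from a concrete algebra. Specifically, by Theorem~\ref{0310a}, there exists an $\mathcal{L}$-algebra $\mathfrak{A}$ with universe $\Vert \mathscr{A}\Vert$ such that $\mathscr{A}=\mathfrak{A}_{\vec{a}}$, where $\vec{a}$ is the sequence $\langle \mathscr{A}(v_0),\mathscr{A}(v_1),\dotsc\rangle$. This puts us in the setting in which Proposition~\ref{2008a} can be invoked.

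Next, I would apply Proposition~\ref{2008a}(2) to $\mathscr{B}$: setting $\vec{b}(i)=\mathscr{B}(v_i)$ for all $i\in\omega$, the proposition yields $\mathscr{B}=\mathfrak{A}_{\vec{b}}$. Applying the same proposition to $\mathscr{C}$ with $\vec{c}(i)=\mathscr{C}(v_i)$ yields $\mathscr{C}=\mathfrak{A}_{\vec{c}}$. The hypothesis $\mathscr{B}(v_i)=\mathscr{C}(v_i)$ for all $i\in\omega$ gives $\vec{b}=\vec{c}$, so that $\mathfrak{A}_{\vec{b}}=\mathfrak{A}_{\vec{c}}$, and therefore $\mathscr{B}=\mathscr{C}$.

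There is no real obstacle here: the whole point of the corollary is to record a uniqueness consequence of the correspondence set up in Proposition~\ref{2008a}. The only subtlety worth flagging is that Proposition~\ref{2008a} is stated for orderly algebras \emph{induced} from a given algebra, so the use of Theorem~\ref{0310a} to pass from an arbitrary orderly $\mathcal{L}$-algebra to one of that form is essential — without it, one cannot directly invoke the proposition. Once that identification is made, the proof is a one-line comparison of the two sequences $\vec{b}$ and $\vec{c}$.
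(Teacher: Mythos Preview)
Your proof is correct and follows essentially the same approach as the paper: realize $\mathscr{A}$ as $\mathfrak{A}_{\vec{a}}$ via Theorem~\ref{0310a}, then apply Proposition~\ref{2008a}\textit{(2)} to each of $\mathscr{B}$ and $\mathscr{C}$ to identify them with $\mathfrak{A}_{\vec{b}}$ and $\mathfrak{A}_{\vec{c}}$ respectively, and conclude from $\vec{b}=\vec{c}$. The paper compresses this into two sentences, but the argument is the same.
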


\begin{proof}
By Theorem~\ref{0310a}, $\mathscr{A}=\mathfrak{A}_{\vec{a}}$ for some $\mathcal{L}$-algebra $\mathfrak{A}$ and sequence
$\vec{a}\in {^\omega}\Vert \mathscr{A}\Vert$. The conclusion then follows by Proposition~\ref{2008a}\textit{(2)}.
\end{proof}

The corollary states that the reduction of any given orderly $\mathcal{L}$-algebra is uniquely determined by its values on the variables.

\section{Ramsey Orderly Algebras}
This section is devoted to the connection between orderly $\mathcal{L}$-algebras and Ramsey algebras. Theorem \ref{1908a} is the main result concerning
the connection.

\begin{definition}
Suppose $\mathcal{L}$ is a language and $\mathscr{A}$ is an orderly $\mathcal{L}$-algebra. We say that $\mathscr{A}$ is \emph{weakly Ramsey} if{f} for
every $X\subseteq \Vert \mathscr{A}\Vert$, there exists a reduction $\mathscr{B}$ of $\mathscr{A}$ \emph{homogeneous for X} in the sense that $\Vert
\mathscr{B}\Vert$ is either contained in or disjoint from $X$. We say that $\mathscr{A}$ is \emph{Ramsey} if{f} for every reduction $\mathscr{B}$ of
$\mathscr{A}$ and $X\subseteq \Vert \mathscr{A}\Vert$, there exists a reduction $\mathscr{C}$ of $\mathscr{B}$ homogeneous for $X$.
\end{definition}


\begin{remark}\label{2110a}
\begin{enumerate}
\item If $\mathscr{A}$ is Ramsey, then it is weakly Ramsey.
\item If some reduction of $\mathscr{A}$ is weakly Ramsey, then $\mathscr{A}$ is weakly Ramsey.
\item If $\mathscr{A}$ is Ramsey, then every reduction of $\mathscr{A}$ is Ramsey.
\end{enumerate}
\end{remark}

\begin{proposition}\label{0710c}
Suppose that $\mathfrak{A}=(A,\mathcal{F})$ is an $\mathcal{L}$-algebra and $\vec{a}\in {^\omega}\!A$.
Then $\mathfrak{A}_{\vec{a}}$ is a Ramsey orderly $\mathcal{L}$-algebra if and only if $\mathfrak{A}$ is Ramsey below $\vec{a}$.
\end{proposition}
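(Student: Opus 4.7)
The plan is to use Proposition~\ref{2008a} as a dictionary that translates between reductions of sequences in ${}^\omega\!A$ and reductions of the induced orderly algebras, and then to see that the two Ramsey conditions say essentially the same thing modulo this translation. The one subtlety is that a reduction of $\mathfrak{A}_{\vec a}$ is taken with respect to a subset $X$ of $\Vert \mathfrak{A}_{\vec a}\Vert = \FR_{\mathcal{F}}(\vec a)$, whereas Ramsey below $\vec a$ speaks about arbitrary $X \subseteq A$; so in the appropriate direction I will intersect with $\FR_{\mathcal{F}}(\vec a)$ before applying the hypothesis.

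For the forward direction, assuming $\mathfrak{A}_{\vec a}$ is Ramsey, fix $\vec b \leq_{\mathcal{F}} \vec a$ and $X \subseteq A$. By Proposition~\ref{2008a}\textit{(1)}, $\mathfrak{A}_{\vec b}$ is a reduction of $\mathfrak{A}_{\vec a}$. Apply the Ramsey property of $\mathfrak{A}_{\vec a}$ to this reduction and to $X \cap \FR_{\mathcal{F}}(\vec a) \subseteq \Vert \mathfrak{A}_{\vec a}\Vert$, obtaining a reduction $\mathscr{C}$ of $\mathfrak{A}_{\vec b}$ whose universe is contained in or disjoint from $X \cap \FR_{\mathcal{F}}(\vec a)$. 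Set $\vec c(i) = \mathscr{C}(v_i)$; by Proposition~\ref{2008a}\textit{(2)} applied to $\mathfrak{A}_{\vec b}$, $\vec c \leq_{\mathcal{F}} \vec b$ and $\mathscr{C} = \mathfrak{A}_{\vec c}$, so $\Vert \mathscr{C}\Vert = \FR_{\mathcal{F}}(\vec c) \subseteq \FR_{\mathcal{F}}(\vec a)$. The containment/disjointness with $X \cap \FR_{\mathcal{F}}(\vec a)$ therefore lifts to containment/disjointness with $X$, which is what Ramsey below $\vec a$ demands.

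For the backward direction, assume $\mathfrak{A}$ is Ramsey below $\vec a$, and let $\mathscr{B}$ be any reduction of $\mathfrak{A}_{\vec a}$ with $X \subseteq \Vert \mathfrak{A}_{\vec a}\Vert$ (so in particular $X \subseteq A$). By Proposition~\ref{2008a}\textit{(2)}, there exists $\vec b \leq_{\mathcal{F}} \vec a$ with $\mathscr{B} = \mathfrak{A}_{\vec b}$. Apply the hypothesis to $\vec b$ and $X$ to get $\vec c \leq_{\mathcal{F}} \vec b$ with $\FR_{\mathcal{F}}(\vec c)$ contained in or disjoint from $X$. Then Proposition~\ref{2008a}\textit{(1)} makes $\mathfrak{A}_{\vec c}$ a reduction of $\mathscr{B}$, and its universe $\FR_{\mathcal{F}}(\vec c)$ is homogeneous for $X$ by construction.

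I do not expect a real obstacle: the content of the statement is a notational translation, and Proposition~\ref{2008a} has already carried out the work that connects the two settings. The only step that requires a small amount of care is the intersection with $\FR_{\mathcal{F}}(\vec a)$ in the forward direction, which is needed only because the definition of ``$\mathscr{A}$ is Ramsey'' restricts colourings $X$ to the universe of $\mathscr{A}$.
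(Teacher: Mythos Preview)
Your proposal is correct and follows essentially the same route as the paper's proof: both directions are handled by invoking Proposition~\ref{2008a} to pass between reductions of sequences and reductions of induced orderly algebras, with the forward direction intersecting the given $X\subseteq A$ with $\Vert\mathfrak{A}_{\vec a}\Vert=\FR_{\mathcal F}(\vec a)$ before applying the Ramsey hypothesis. If anything, you are slightly more explicit than the paper in citing which part of Proposition~\ref{2008a} is used at each step.
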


\begin{proof}
Proposition~\ref{2008a} will be applied repeatedly.
Assume $\mathfrak{A}_{\vec{a}}$ is Ramsey. Suppose $\vec{b}\leq_{\mathcal{F}}\vec{a}$ and $X\subseteq A$. Then $\mathfrak{A}_{\vec{b}}$ is a reduction
of $\mathfrak{A}_{\vec{a}}$. Choose  a reduction $\mathscr{C}$ of $\mathfrak{A}_{\vec{b}}$ homogeneous for $X\cap \Vert \mathfrak{A}_{\vec{a}}\Vert$.
 We must have $\mathscr{C}= \mathfrak{A}_{\vec{c}}$ for some $\vec{c}\leq_{\mathcal{F}}\vec{b}$.
 By Remark~\ref{2009a}, $\Vert \mathscr{C}\Vert \subseteq \Vert \mathfrak{A}_{\vec{a}}\Vert$. Together with $\Vert
 \mathscr{C}\Vert=\FR_{\mathcal{F}}(\vec{c})$,
 it follows that $\FR_{\mathcal{F}}(\vec{c})$ is
 either contained in or disjoint from $X$. Therefore, $\mathfrak{A}$ is Ramsey below $\vec{a}$.

Conversely, assume $\mathfrak{A}$ is Ramsey below $\vec{a}$.
Suppose  $X\subseteq \Vert\mathfrak{A}_{\vec{a}}\Vert$ and   $\mathscr{B}$ is a reduction of $\mathfrak{A}_{\vec{a}}$, say
$\mathscr{B}=\mathfrak{A}_{\vec{b}}$ for some $\vec{b}\leq_{\mathcal{F}}\vec{a}$.  Choose $\vec{c}\leq_{\mathcal{F}}\vec{b}$ such that
$\FR_{\mathcal{F}}(\vec{c})$ is
either contained in or disjoint from $X$. Then $\mathfrak{A}_{\vec{c}}$ is a reduction of $\mathscr{B}$ homogeneous for $X$ because $\Vert
\mathfrak{A}_{\vec{c}}\Vert=\FR_{\mathcal{F}}(\vec{c})$.
\end{proof}

\begin{theorem}\label{1908a}
Suppose that $\mathfrak{A}=(A,\mathcal{F})$ is an $\mathcal{L}$-algebra. The following are equivalent.
\begin{enumerate}
\item $\mathfrak{A}$ is a Ramsey algebra.
\item $\mathfrak{A}_{\vec{a}}$ is a Ramsey orderly $\mathcal{L}$-algebra for all $\vec{a}\in {^\omega\!}A$.
\item $\mathfrak{A}_{\vec{a}}$ is a weakly Ramsey orderly $\mathcal{L}$-algebra for all $\vec{a}\in {^\omega\!}A$.
\end{enumerate}
\end{theorem}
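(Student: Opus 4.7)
The plan is to establish the cycle $(1) \Leftrightarrow (2) \Rightarrow (3) \Rightarrow (1)$, leaning on the bridge between reductions of sequences and reductions of orderly algebras provided by Proposition~\ref{2008a}, and on the equivalence between Ramseyness of $\mathfrak{A}_{\vec{a}}$ and Ramseyness of $\mathfrak{A}$ below $\vec{a}$ provided by Proposition~\ref{0710c}.

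For $(1) \Leftrightarrow (2)$, I would simply chain Remark~\ref{0102a} with Proposition~\ref{0710c}: $\mathfrak{A}$ is Ramsey iff it is Ramsey below every $\vec{a}\in {^\omega}\!A$, iff $\mathfrak{A}_{\vec{a}}$ is a Ramsey orderly $\mathcal{L}$-algebra for every such $\vec{a}$. The direction $(2) \Rightarrow (3)$ is immediate from Remark~\ref{2110a}\textit{(1)}, since every Ramsey orderly algebra is weakly Ramsey.

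The substantive step is $(3) \Rightarrow (1)$. Here the key observation is that the weak Ramsey hypothesis is assumed uniformly over \emph{all} sequences, which lets us upgrade ``weakly Ramsey'' to ``Ramsey'' by chaining applications. Concretely, to show $\mathfrak{A}$ is Ramsey, fix $\vec{a}\in {^\omega}\!A$ and $X\subseteq A$. By (3), $\mathfrak{A}_{\vec{a}}$ is weakly Ramsey, so there is a reduction $\mathscr{B}$ of $\mathfrak{A}_{\vec{a}}$ with $\Vert\mathscr{B}\Vert$ either contained in or disjoint from $X$. Setting $\vec{b}(i)=\mathscr{B}(v_i)$, Proposition~\ref{2008a}\textit{(2)} yields $\vec{b}\leq_{\mathcal{F}}\vec{a}$ and $\mathscr{B}=\mathfrak{A}_{\vec{b}}$. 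Since $\Vert\mathfrak{A}_{\vec{b}}\Vert=\FR_{\mathcal{F}}(\vec{b})$, this is precisely the witness required for $\mathfrak{A}$ to be Ramsey.

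I do not anticipate any real obstacle: all the machinery has been set up so that this theorem becomes a short deduction. The only mild subtlety is appreciating why weakly Ramsey at every sequence suffices to obtain the full Ramsey property (rather than merely weak Ramseyness) of $\mathfrak{A}$ --- namely, the ``reduction-below'' flexibility required by the full definition is recovered by applying the weak hypothesis to $\vec{b}$ itself instead of $\vec{a}$, which is allowed precisely because (3) is a universally quantified statement.
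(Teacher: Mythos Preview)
Your proposal is correct and follows essentially the same approach as the paper: the paper proves $(1)\Leftrightarrow(2)$ via Remark~\ref{0102a} and Proposition~\ref{0710c}, notes $(2)\Rightarrow(3)$ is immediate, and closes the cycle by proving $(3)\Rightarrow(2)$ (rather than your $(3)\Rightarrow(1)$) using Proposition~\ref{2008a}\textit{(2)} to recognize any reduction $\mathscr{B}$ of $\mathfrak{A}_{\vec{a}}$ as some $\mathfrak{A}_{\vec{b}}$, to which hypothesis (3) again applies. One small technical point: when you invoke weak Ramseyness of $\mathfrak{A}_{\vec{a}}$, the set you feed in should strictly be $X\cap\Vert\mathfrak{A}_{\vec{a}}\Vert$ rather than $X\subseteq A$, after which homogeneity for $X$ follows since $\Vert\mathscr{B}\Vert\subseteq\Vert\mathfrak{A}_{\vec{a}}\Vert$.
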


\begin{proof}
The equivalence of \textit{(1)} and \textit{(2)} follows from Remark~\ref{0102a} and Theorem~\ref{0710c} while
\textit{(2)} immediately implies \textit{(3)}.
Assume \textit{(3)} holds. Fix $\vec{a}\in {^\omega\!}A$. Suppose $X\subseteq \Vert\mathfrak{A}_{\vec{a}}\Vert$ and $\mathscr{B}$ is a reduction of
$\mathfrak{A}_{\vec{a}}$. By Proposition~\ref{2008a}\textit{(2)} and our assumption, $\mathscr{B}$ is weakly Ramsey. Choose a  reduction $\mathscr{C}$
of $\mathscr{B}$ homogeneous for $X\cap \Vert \mathscr{B}\Vert$. This $\mathscr{C}$ is also homogeneous for $X$ as required.
\end{proof}

For the rest of this section, we present an assorted array of elementary results. We begin with showing that if an orderly $\mathcal{L}$-algebra is a
one-to-one function, then it cannot be Ramsey.

\begin{theorem}\label{1709a}
Suppose that $\mathcal{L}$ is a nonempty language and $\mathscr{A}$ is an orderly \mbox{$\mathcal{L}$-algebra}. If $\mathscr{A}$ is one-to-one, then it
is not weakly Ramsey and thus not Ramsey.
\end{theorem}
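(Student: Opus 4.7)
The plan is to exhibit a single set $X \subseteq \Vert\mathscr{A}\Vert$ that no reduction of $\mathscr{A}$ can homogenize. Once $\mathscr{A}$ is shown to fail being weakly Ramsey, the failure to be Ramsey is automatic from Remark~\ref{2110a}(1), so I only need to attack the weak Ramsey property.

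I will define the \emph{depth} of an orderly term recursively by $d(v_i)=0$ and $d(ft_1\dotsm t_n)=1+\max_{1\le k\le n}d(t_k)$, then set $Y=\{\,t\in\OT(\mathcal{L})\mid d(t)\text{ is even}\,\}$ and $X=\mathscr{A}(Y)$. For any reduction $\mathscr{B}$ of $\mathscr{A}$ witnessed by some $\vec{t}\in\Adm(\mathcal{L})$, we have $\Vert\mathscr{B}\Vert=\{\,\mathscr{A}(s[\vec{t}])\mid s\in\OT(\mathcal{L})\,\}$. Because $\mathscr{A}$ is one-to-one, $\Vert\mathscr{B}\Vert\subseteq X$ is equivalent to $s[\vec{t}]\in Y$ for every $s\in\OT(\mathcal{L})$, while $\Vert\mathscr{B}\Vert\cap X=\emptyset$ is equivalent to $s[\vec{t}]\notin Y$ for every $s\in\OT(\mathcal{L})$. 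Thus it suffices to find, for each such $\vec{t}$, two orderly terms whose images under $[\vec{t}]$ have opposite depth parity.

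The key combinatorial step will exploit the hypothesis $\mathcal{L}\neq\emptyset$: I fix some $f\in\mathcal{L}$ of arity $n$ and consider the orderly terms $v_0,v_1,\dotsc,v_{n-1}$ together with $u=fv_0v_1\dotsm v_{n-1}$. Their substituted forms are $v_i[\vec{t}]=\vec{t}(i)$ and $u[\vec{t}]=f\vec{t}(0)\dotsm\vec{t}(n-1)$, the latter being a legitimate orderly term by admissibility of $\vec{t}$. Picking an index $j$ with $d(\vec{t}(j))=\max_{0\le i\le n-1}d(\vec{t}(i))$, the terms $v_j[\vec{t}]$ and $u[\vec{t}]$ have depths $d(\vec{t}(j))$ and $1+d(\vec{t}(j))$, respectively, differing by exactly $1$ and hence of opposite parity. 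Therefore $\Vert\mathscr{B}\Vert$ meets both $X$ and its complement in $\Vert\mathscr{A}\Vert$, so $\mathscr{B}$ is not homogeneous for $X$.

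There is no serious obstacle: injectivity of $\mathscr{A}$ transports the question back to the syntactic world of orderly terms, where depth parity is a robust invariant distinguishing a term from the result of prefixing one more operation to it. The only check requiring any care is that $u[\vec{t}]$ is itself orderly, which is exactly what admissibility of $\vec{t}$ guarantees.
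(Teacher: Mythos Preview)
Your proof is correct and follows the same overall strategy as the paper: use injectivity of $\mathscr{A}$ to transfer the problem to the syntactic level, then choose a parity invariant on orderly terms that flips upon applying one more $f$. The paper's invariant is the number of occurrences of the fixed symbol $f$ appearing before the first variable (so one always compares $\mathscr{A}(t_0)$ with $\mathscr{A}(ft_0\dotsm t_{n-1})$ directly), whereas you use the depth of the term and therefore must select the index $j$ of maximal depth among $\vec{t}(0),\dotsc,\vec{t}(n-1)$ before comparing. Both choices work; the paper's is marginally slicker in that no maximization step is needed, while yours has the small aesthetic advantage that the invariant does not depend on the particular $f$ chosen.
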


\begin{proof}
We define a subset $X$ of $\Vert \mathscr{A}\Vert$ as follows.
Fix a function symbol $f\in \mathcal{L}$ and say $f$ is $n$-ary. For every $t\in \OT(\mathcal{L})$, let $\mathscr{A}(t)\in X$ if{f} the number of $f$
occurring before the first variable in $t$ is even.  Since $\mathscr{A}$ is one-to-one, the set $X$ is well-defined.
Suppose $\mathscr{B}$ is any reduction of $\mathscr{A}$, say witnessed by $\langle t_0,t_1,t_2,\dotsc\rangle$.
By definition, $\mathscr{B}(v_0)=\mathscr{A}(t_0)$ and $\mathscr{B}(fv_0v_1\dotsm v_n)=\mathscr{A}(ft_0t_1\dotsm t_n)$.
Obviously, $\mathscr{A}(t_0)\in X$ if and only if $\mathscr{A}(ft_0t_1\dotsm t_n)\notin X$.
Hence, $\mathscr{B}$ is not homogeneous for $X$. Therefore, $\mathscr{A}$ is not weakly Ramsey.
\end{proof}

\begin{corollary}\label{1111a}
If $f$ is a one-to-one binary operation on an infinite set $A$, then $(A,f)$ is not a Ramsey algebra.
\end{corollary}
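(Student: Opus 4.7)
The plan is to combine Theorem~\ref{1908a} with Theorem~\ref{1709a}: by the equivalence of~(1) and~(3) in Theorem~\ref{1908a}, $(A,f)$ fails to be a Ramsey algebra as soon as there is a single $\vec{a}\in{}^\omega\!A$ for which $\mathfrak{A}_{\vec{a}}$ is not weakly Ramsey; and by Theorem~\ref{1709a} this is guaranteed once $\mathfrak{A}_{\vec{a}}$ is one-to-one as a function on $\OT(\mathcal{L}_{\{f\}})$, where $\mathfrak{A}=(A,f)$. Hence the whole task reduces to producing such an $\vec{a}$.

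Since $f$ is one-to-one, the compound evaluations $f(s_1,s_2)^{\mathfrak{A}}[\vec{a}]$ and $f(t_1,t_2)^{\mathfrak{A}}[\vec{a}]$ coincide exactly when $s_i^{\mathfrak{A}}[\vec{a}]=t_i^{\mathfrak{A}}[\vec{a}]$ componentwise. An easy induction on term complexity then reduces the injectivity of $\mathfrak{A}_{\vec{a}}$ to the two conditions (a) $\vec{a}$ is injective and (b) no $a_i$ equals the evaluation at $\vec{a}$ of any compound orderly term. When $A\setminus f(A\times A)$ is infinite, both (a) and (b) are immediate upon taking $\vec{a}$ to be any injective enumeration of $A\setminus f(A\times A)$, since in this case no $a_i$ belongs to the image of $f$ at all.

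In the remaining case, in which only finitely many elements of $A$ lie outside $f(A\times A)$, I would build $\vec{a}$ by recursion on $n$. A crucial finiteness fact is that the set of orderly terms in $v_0,\dots,v_n$ is finite, since each variable occurs at most once in an orderly term, so only finitely many pairs of distinct terms impose constraints on $a_n$ at stage~$n$. Because $v_n$ is necessarily the rightmost variable of any orderly term in $\{v_0,\dots,v_n\}$ that involves it, the dependence of such a term's evaluation on $a_n$ takes the composite form $f(c_1,f(c_2,\dotsc,f(c_k,a_n)\dotsc))$, which is one-to-one in $a_n$ by the injectivity of $f$. Each pairwise constraint at stage~$n$ thus reduces either to an equation between subterm evaluations not involving $v_n$ (excluded by the inductive hypothesis), or to an equation $F(a_n)=c$ with $F$ one-to-one (at most one bad value), or to a fixed-point equation $a_n=F_{t'}(a_n)$.

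The main obstacle is this last fixed-point case, because a one-to-one $F_{t'}$ may have infinitely many fixed points. Resolving it would require a diagonalization woven into the recursion that controls the constants $c_1,\dots,c_k$ appearing in $F_{t'}$: the shallowest pathology $f(c,x)=x$ for all $x$ is ruled out directly by the injectivity of $f$ on a set of size at least two, while for deeper composites the condition $F_{t'}=\operatorname{id}$ imposes a nontrivial equation on $a_0,\dots,a_{n-1}$ that can be kept avoided by a careful generic choice at the earlier stages. Once $F_{t'}\neq\operatorname{id}$ is enforced for every relevant $t'$ and its fixed-point set is thereby kept finite, only finitely many values of $a_n$ are forbidden at stage~$n$, and since $A$ is infinite, a valid $a_n$ exists and the recursion continues to yield the desired $\vec{a}$.
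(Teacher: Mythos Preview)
Your overall framework is exactly the paper's: reduce via Theorems~\ref{1908a} and~\ref{1709a} to constructing some $\vec{a}$ with $\mathfrak{A}_{\vec{a}}$ one-to-one, and build $\vec{a}$ by recursion using that $\OT_n$ is finite and that injectivity of $f$ lets you strip compound terms layer by layer. Your easy case ($A\setminus f(A\times A)$ infinite) is correct and is a pleasant shortcut the paper does not take.

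The gap is in your hard case, precisely at the fixed-point step. Your claim that once $F_{t'}\neq\operatorname{id}$ its fixed-point set is finite is false: an injection $A\to A$ that is not the identity can still have infinitely many fixed points (e.g.\ on $\mathbb{Z}$, fix the evens and shift the odds by $2$), and nothing prevents such a map from arising as a composite $x\mapsto f(c_1,f(c_2,\dotsc,f(c_k,x)\dotsc))$ for a suitably chosen injective $f$. Your proposed ``diagonalization woven into the recursion'' would therefore have to rule out not merely $F_{t'}=\operatorname{id}$ but $F_{t'}$ having cofinitely many fixed points, and you have not indicated how to do that. (Incidentally, your remark that $f(c,x)=x$ for all $x$ is impossible for injective $f$ is correct, since then $f(c,\cdot)$ would be onto, yet $\operatorname{im}f(c',\cdot)$ for any $c'\neq c$ must be disjoint from it---but this only kills the depth-one pathology.)

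The paper's inductive construction sidesteps the fixed-point issue altogether by a stronger choice of $a_{n+1}$: rather than merely avoiding fixed points, it takes $a_{n+1}$ outside
\[
\bigcup_{t\in\OT_n}\operatorname{im}\bigl(f(t^{\mathfrak{A}}[a_0,\dotsc,a_n],\,\cdot\,)\bigr),
\]
together with a finite set $T$ handling the ``$F(a_{n+1})=c$'' constraints. The point you are missing is that this complement is always infinite, with no case split needed: since the finitely many images above are pairwise disjoint by injectivity of $f$, any $c\in A$ distinct from the finitely many values $t^{\mathfrak{A}}[a_0,\dotsc,a_n]$ has $\operatorname{im}f(c,\cdot)$ disjoint from all of them, hence contained in the complement, which therefore has size $|A|$. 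With $a_{n+1}$ so chosen, the equality $a_{n+1}=f(c_t,\text{anything})$ is impossible outright, so the base case $t=v_{n+1}$ versus $t'$ compound never occurs, and the stripping induction goes through cleanly.
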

\begin{proof}[Sketch of proof.]
Through careful analysis, a sequence $\vec{a}$ such that the induced orderly algebra $\mathfrak{A}_{\vec{a}}$ is one-to-one can be constructed
inductively. Hence, by
Theorems \ref{1908a} and \ref{1709a}, $(A, f)$ is not a Ramsey algebra.
\end{proof}

\begin{theorem}\label{0808}
Suppose that $\mathcal{L}$ is a language and $\mathscr{A}$ is an orderly $\mathcal{L}$-algebra with a finite universe. Then $\mathscr{A}$ is weakly
Ramsey if and only if
$\mathscr{A}$ has a reduction that is a trivial orderly $\mathcal{L}$-algebra.
\end{theorem}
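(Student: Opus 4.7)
The plan is to handle the two directions separately, with the reverse direction being almost immediate and the forward direction requiring the substantive work.

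For the reverse direction, I would verify directly from the definitions: if $\mathscr{A}$ has a trivial reduction $\mathscr{B}$, so that $\Vert\mathscr{B}\Vert=\{c\}$ for some $c\in\Vert\mathscr{A}\Vert$, then for any $X\subseteq \Vert\mathscr{A}\Vert$ the reduction $\mathscr{B}$ is itself homogeneous for $X$: if $c\in X$ then $\Vert\mathscr{B}\Vert\subseteq X$, and if $c\notin X$ then $\Vert\mathscr{B}\Vert\cap X=\emptyset$. This exhibits the required homogeneous reduction for every $X$, so $\mathscr{A}$ is weakly Ramsey.

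For the forward direction, my first instinct would be induction on $n=\vert\Vert\mathscr{A}\Vert\vert$. The base case $n=1$ is immediate, since $\mathscr{A}$ itself is already a trivial orderly $\mathcal{L}$-algebra and is a reduction of itself via the identity admissible sequence $\langle v_0,v_1,v_2,\dotsc\rangle$. For the inductive step with $n\geq 2$, I would fix any $c\in\Vert\mathscr{A}\Vert$ and apply the weak Ramsey property of $\mathscr{A}$ to the singleton $X=\{c\}$, obtaining a reduction $\mathscr{B}$ of $\mathscr{A}$ homogeneous for $\{c\}$. If $\Vert\mathscr{B}\Vert\subseteq\{c\}$, then $\Vert\mathscr{B}\Vert=\{c\}$ and $\mathscr{B}$ is already the desired trivial reduction. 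Otherwise $\Vert\mathscr{B}\Vert\subseteq\Vert\mathscr{A}\Vert\setminus\{c\}$ has size at most $n-1$, and the inductive hypothesis, applied to $\mathscr{B}$, would yield a trivial reduction of $\mathscr{B}$; by transitivity of the reduction relation (Remark~\ref{2009a}\textit{(2)}) this is also a trivial reduction of $\mathscr{A}$.

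The main obstacle is that to invoke the inductive hypothesis on $\mathscr{B}$ I need $\mathscr{B}$ itself to be weakly Ramsey, and this does not follow from the definitions: Remark~\ref{2110a} only propagates weak Ramseyness from a reduction up to the ambient algebra, not in the downward direction. To circumvent this, I would replace the naive induction by a minimal-universe argument. Let $\mathcal{U}=\{\,\Vert\mathscr{B}'\Vert\mid \mathscr{B}'\text{ is a reduction of }\mathscr{A}\,\}$, a finite family of subsets of $\Vert\mathscr{A}\Vert$, and choose $S^*\in\mathcal{U}$ with $\vert S^*\vert$ minimum. Aiming at a contradiction, suppose $\vert S^*\vert\geq 2$. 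For each $c\in S^*$, applying weak Ramsey of $\mathscr{A}$ to $X=S^*\setminus\{c\}$ would rule out, by minimality of $\vert S^*\vert$, the alternative $\Vert\mathscr{C}\Vert\subseteq S^*\setminus\{c\}$ (whose size would be strictly below $\vert S^*\vert$), thereby forcing a reduction $\mathscr{C}$ with $\Vert\mathscr{C}\Vert\subseteq(\Vert\mathscr{A}\Vert\setminus S^*)\cup\{c\}$ and $\vert\Vert\mathscr{C}\Vert\vert\geq\vert S^*\vert$. The delicate step will then be combining these forced inclusions and size constraints across the different choices of $c\in S^*$, together with further applications of weak Ramsey to singletons and to sets obtained as the universes of these auxiliary reductions, to squeeze out a combinatorial contradiction and conclude $\vert S^*\vert=1$. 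Once $\vert S^*\vert=1$ is established, any reduction witnessing $S^*\in\mathcal{U}$ is the desired trivial reduction of $\mathscr{A}$.
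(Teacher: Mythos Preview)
Your reverse direction is fine and matches the paper. For the forward direction, you correctly diagnose that the naive induction fails because weak Ramseyness need not descend to reductions, and your pivot to a minimal-universe argument is the right instinct---the paper also works with minimal reductions. But the paragraph beginning ``The delicate step'' is where all of the actual content lies, and as written it is a hope rather than an argument. The only constraint you extract is that every reduction of $\mathscr{A}$ has universe of size at least $|S^*|$, and this gives no leverage once $|\Vert\mathscr{A}\Vert|\geq 2|S^*|-1$: for any $X$ one of $X$, $\Vert\mathscr{A}\Vert\setminus X$ already has size $\geq |S^*|$, so weak Ramseyness is vacuously compatible with the size bound. Moreover, the homogeneous reductions you obtain for different $c\in S^*$ are unrelated reductions of $\mathscr{A}$; you cannot iterate on them, since they are not themselves known to be weakly Ramsey, and you have no mechanism to compare or combine them. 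I do not see how to ``squeeze out a combinatorial contradiction'' from inclusions and cardinalities alone.

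The paper supplies exactly the missing structural ingredient. Having fixed a minimal reduction $\mathscr{B}$, it first shows that every $b\in\Vert\mathscr{B}\Vert$ is attained by $\mathscr{B}$ at arbitrarily high indices (otherwise a tail reduction would drop $b$, contradicting minimality), so there is a reduction of $\mathscr{B}$ taking the constant value $b$ on all variables. Then Corollary~\ref{0310c}---a reduction of $\mathscr{A}$ is determined by its values on the variables---forces any two minimal reductions sharing an element to have the \emph{same} universe. Hence the distinct universes of minimal reductions are pairwise disjoint, each of size at least two; taking $X$ to consist of one representative from each such universe yields a set for which no minimal reduction, and therefore (after passing to a further minimal reduction) no reduction at all, can be homogeneous, contradicting weak Ramseyness. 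The crux is the appeal to Corollary~\ref{0310c}, which is a fact about orderly algebras and not a cardinality argument; your outline does not invoke it or any substitute for it.
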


\begin{proof}
The backward direction is immediate. Now, suppose $\mathscr{A}$ is weakly Ramsey. Assume $\mathscr{A}$ has no reduction that is trivial.
For this proof, let us say that a reduction $\mathscr{B}$ of $\mathscr{A}$ is minimal if{f} no reduction of $\mathscr{B}$ has a universe with smaller
cardinality.

\begin{claim}
Suppose $\mathscr{B}$ is a minimal reduction of $\mathscr{A}$ and $b\in \Vert \mathscr{B}\Vert$. Then there exists a reduction $\mathscr{C}$ of
$\mathscr{B}$ such that
$\mathscr{C}(v_i)=b$ for every $i\in \omega$.
\end{claim}

\begin{proof}[Proof of claim]
Such $\mathscr{C}$ exists if there are orderly terms $t_0<t_1<t_2<\dotsb$ such that $\mathscr{B}(t_i)=b$ for all $i \in \omega$.
It suffices to show that for every natural number $n$, there exists $t\in \OT(\mathcal{L})$ such that $\mathscr{B}(t)=b$ and the index of the first
variable of $t$ is greater than $n$.  Assume this is not the case. Then the first variable of any $t\in \OT(\mathcal{L})$ such that $\mathscr{B}(t)=b$
 is bounded, say by $N$. Let $\mathscr{C}$ be the reduction of $\mathscr{B}$ witnessed by $v_{N+1}<v_{N+2}<v_{N+3}<\dotsb$.
 Clearly, $b\notin \Vert \mathscr{C}\Vert$ and hence $\Vert\mathscr{C}\Vert$ is a proper subset of $\Vert\mathscr{B} \Vert$, contradicting the
 minimality of $\mathscr{B}$.
\end{proof}

\begin{claim}
If $\mathscr{B}$ and $\mathscr{C}$ are minimal reductions of $\mathscr{A}$, then $\Vert\mathscr{B}\Vert$ and $\Vert\mathscr{C}\Vert$ are either equal or
disjoint.
\end{claim}

\begin{proof}[Proof of claim]
We argue by contradiction. Assume $b\in \Vert \mathscr{B}\Vert \cap \Vert \mathscr{C}\Vert$. Then by the previous claim, there is a reduction
$\mathscr{D}$ of $\mathscr{B}$ such that $\mathscr{D}(v_i)=b$ for all $i\in \omega$.
Likewise, there is a reduction $\mathscr{E}$ of $\mathscr{C}$ such that $\mathscr{E}(v_i)=b$ for all $i\in \omega$.
Since $\mathscr{D}$ and $\mathscr{E}$ are both reductions of $\mathscr{A}$, by Corollary~\ref{0310c}, $\mathscr{D}=\mathscr{E}$.
It follows that $\Vert \mathscr{B}\Vert= \Vert \mathscr{D}\Vert=\Vert\mathscr{E}\Vert= \Vert\mathscr{C}\Vert$.
\end{proof}

By the assumption, the universe of any minimal reduction of $\mathscr{A}$ has size at least two.
Let $X$ consist of exactly one representative from the universe of each minimal reduction of $\mathscr{A}$ such that if two minimal reductions have the
same universe, they share the same representative.
By our claim, $\Vert \mathscr{A}\Vert \backslash X$ contains at least some element from the universe of each minimal reduction of $\mathscr{A}$.
Since $\mathscr{A}$ is weakly Ramsey, choose a reduction $\mathscr{B}$ of $\mathscr{A}$ homogeneous for $X$. Such $\mathscr{B}$ can be further required
to be a  minimal reduction of $\mathscr{A}$. However, this contradicts our choice of $X$.
\end{proof}

The following corollary of Theorem~\ref{0808} has a direct proof in \cite{wcT13}.

\begin{corollary}
Every finite Ramsey algebra is a degenerate Ramsey algebra.
\end{corollary}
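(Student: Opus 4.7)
The plan is to string together Theorems~\ref{1908a} and~\ref{0808} with Proposition~\ref{2008a}\textit{(2)}. Suppose $\mathfrak{A} = (A,\mathcal{F})$ is a finite Ramsey algebra and fix an arbitrary $\vec{a} \in {}^\omega\! A$; we want to produce $\vec{c} \leq_{\mathcal{F}} \vec{a}$ with $|\FR_{\mathcal{F}}(\vec{c})| = 1$.

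First I would invoke Theorem~\ref{1908a} to conclude that the induced orderly algebra $\mathfrak{A}_{\vec{a}}$ is Ramsey, and in particular weakly Ramsey by Remark~\ref{2110a}\textit{(1)}. Its universe $\Vert \mathfrak{A}_{\vec{a}}\Vert = \FR_{\mathcal{F}}(\vec{a})$ is a subset of the finite set $A$, so it is finite. Theorem~\ref{0808} then applies and yields a reduction $\mathscr{C}$ of $\mathfrak{A}_{\vec{a}}$ that is a trivial orderly $\mathcal{L}_{\mathcal{F}}$-algebra, i.e.\ a constant function on $\OT(\mathcal{L}_{\mathcal{F}})$; in particular $|\Vert \mathscr{C}\Vert| = 1$.

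Finally, Proposition~\ref{2008a}\textit{(2)} promotes $\mathscr{C}$ back to an induced orderly algebra: setting $\vec{c}(i) = \mathscr{C}(v_i)$ for every $i\in\omega$ gives $\vec{c} \leq_{\mathcal{F}} \vec{a}$ with $\mathscr{C} = \mathfrak{A}_{\vec{c}}$. Since $\FR_{\mathcal{F}}(\vec{c}) = \Vert \mathfrak{A}_{\vec{c}}\Vert = \Vert \mathscr{C}\Vert$ has exactly one element, $\mathfrak{A}$ satisfies the defining condition of a degenerate Ramsey algebra.

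Essentially all the content is packed into Theorem~\ref{0808}, which is already established; the present corollary only needs to translate between the orderly-algebra language and the original reduction language, and that translation is handled cleanly by Proposition~\ref{2008a}. There is no real obstacle — the finiteness of $A$ is used only to guarantee that $\Vert \mathfrak{A}_{\vec{a}}\Vert$ is finite so that Theorem~\ref{0808} is applicable.
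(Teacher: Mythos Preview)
Your proof is correct and follows exactly the route the paper intends: the corollary is stated without proof as an immediate consequence of Theorem~\ref{0808}, and you have supplied precisely the translation steps (via Theorem~\ref{1908a} and Proposition~\ref{2008a}\textit{(2)}) that the paper leaves implicit.
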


The next theorem is a reformulation of Corollary~4.9 in \cite{wcT13a} into the context of orderly algebra.
The proof of the nontrivial backward direction is a minor modification of the proof of Theorem~4.8 in \cite{wcT13a} and is thus omitted.

\begin{theorem}\label{1109a}
Suppose $\mathcal{L}$ is a language that contains some binary function symbol and $\mathscr{A}$ is an orderly $\mathcal{L}$-algebra.
Then $\mathscr{A}$ is weakly Ramsey if and only if for every $X\subseteq \Vert\mathscr{A}\Vert$, there exists a reduction $\mathscr{B}$ of $\mathscr{A}$
\emph{pre-homogeneous for $X$}, in the sense that for every $t_1,t_2\in \OT(\mathcal{L})$ such that the same variables appear in both orderly terms, $
\mathscr{B}(t_1)\in X \text{ if and only if } \mathscr{B}(t_2)\in X$.
\end{theorem}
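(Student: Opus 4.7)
My plan is to handle the two directions of the biconditional separately. The forward direction is immediate: any homogeneous reduction $\mathscr{B}$ of $\mathscr{A}$ satisfies $\Vert\mathscr{B}\Vert\subseteq X$ or $\Vert\mathscr{B}\Vert\cap X=\emptyset$ and is therefore trivially pre-homogeneous for $X$.

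For the nontrivial backward direction, given $X\subseteq \Vert\mathscr{A}\Vert$, I would first use the hypothesis to obtain a reduction $\mathscr{A}_1$ of $\mathscr{A}$ that is pre-homogeneous for $X$, so that whether $\mathscr{A}_1(t)\in X$ depends only on the set of indices of the variables appearing in $t$. Because $\mathcal{L}$ has a binary symbol $f$, for every nonempty finite $S\subseteq \omega$ one can write down an orderly term $t_S$ whose variables are precisely $\{\,v_i\mid i\in S\,\}$ (for instance by right-associating $f$ over these variables in order). One then defines $c\colon [\omega]^{<\omega}\setminus\{\emptyset\}\to\{0,1\}$ by $c(S)=1$ iff $\mathscr{A}_1(t_S)\in X$; pre-homogeneity makes this well-defined independently of the chosen $t_S$.

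Next, I would apply the finite-unions form of Hindman's Theorem to $c$: there exist pairwise disjoint nonempty finite sets $V_0,V_1,V_2,\ldots\subseteq\omega$ and some $\epsilon\in\{0,1\}$ with $c(V_{i_1}\cup\cdots\cup V_{i_k})=\epsilon$ for every $k\geq 1$ and $i_1<\cdots<i_k$. After thinning one may further assume $\max V_i<\min V_{i+1}$, so that $\vec{t}=\langle t_{V_0},t_{V_1},t_{V_2},\ldots\rangle\in\Adm(\mathcal{L})$. Let $\mathscr{B}$ be the reduction of $\mathscr{A}_1$ witnessed by $\vec{t}$; by Remark~\ref{2009a} it is also a reduction of $\mathscr{A}$. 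For any $s\in\OT(\mathcal{L})$ whose variables are indexed by $\{j_1<\cdots<j_k\}$, the variables of $s[\vec{t}]$ are indexed exactly by $V_{j_1}\cup\cdots\cup V_{j_k}$, so pre-homogeneity of $\mathscr{A}_1$ yields $\mathscr{B}(s)=\mathscr{A}_1(s[\vec{t}])\in X$ iff $\epsilon=1$. Hence $\Vert\mathscr{B}\Vert$ is either contained in or disjoint from $X$.

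The substantive step is the combinatorial one: Hindman's Theorem (in its finite-union form) is what lifts the variable-set-dependent condition of pre-homogeneity to genuine homogeneity, and the binary-symbol hypothesis is used precisely to guarantee that every nonempty finite index set is realized as the variable set of some orderly term, so that the coloring $c$ is well-defined on all of $[\omega]^{<\omega}\setminus\{\emptyset\}$. Without such a symbol one could not produce terms with arbitrary nonempty finite variable sets, and the passage from pre-homogeneity to homogeneity would break down.
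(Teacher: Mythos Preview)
Your argument is correct. The forward direction is indeed trivial, and for the backward direction your reduction to the finite-unions form of Hindman's Theorem is exactly the right move: pre-homogeneity collapses the problem to a $2$-coloring of $[\omega]^{<\omega}\setminus\{\emptyset\}$, the binary symbol guarantees every nonempty finite index set is realized by some orderly term, and a Hindman block sequence $\langle V_i\rangle$ with $\max V_i<\min V_{i+1}$ yields an admissible $\vec{t}$ whose associated reduction is homogeneous. Your justification of the thinning step (pass to a subsequence of the pairwise disjoint Hindman sets to force block position; finite unions of a subsequence remain monochromatic) is sound, and the appeal to transitivity of reduction (Remark~\ref{2009a}) to conclude $\mathscr{B}\leq\mathscr{A}$ is the right bookkeeping.

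The paper itself omits the proof, pointing to Theorem~4.8 of \cite{wcT13a}; that argument likewise hinges on Hindman's Theorem (equivalently, the Ramsey property of semigroups, Theorem~\ref{050613a}), so your approach is essentially the intended one. Your closing remark on why the binary-symbol hypothesis is indispensable is also on target and matches the paper's subsequent observation that with only a ternary symbol the conclusion can fail.
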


\begin{remark}
The assumption that $\mathcal{L}$ contains some binary function symbol is necessary. Otherwise, let $\mathfrak{A}=(\mathbb{N},+_3)$, where
$+_3(x,y,z)=x+y+z$ for all $x,y,z\in \mathbb{N}$. If $\vec{a}$ is any infinite sequence of odd numbers, for example, then $\mathfrak{A}_{\vec{a}}$ is
not weakly Ramsey although $\mathfrak{A}_{\vec{a}}$ is trivially pre-homogeneous for any subset of $\Vert \mathfrak{A}_{\vec{a}}\Vert$.
\end{remark}

\begin{corollary}\label{0312b}
Every orderly semigroup is Ramsey.
\end{corollary}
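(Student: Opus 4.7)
The plan is to invoke Theorem~\ref{1109a} to obtain the weak Ramsey property, then boost it to the Ramsey property via the observation that reductions of orderly semigroups are again orderly semigroups.

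First I would check that any reduction $\mathscr{B}$ of an orderly semigroup $\mathscr{A}$, witnessed by some admissible $\vec{t}$, is itself an orderly semigroup. For any $t_1<t_2<t_3$ in $\OT(\mathcal{L})$, admissibility of $\vec{t}$ yields $t_1[\vec{t}]<t_2[\vec{t}]<t_3[\vec{t}]$; applying Definition~\ref{2708a} and then the orderly semigroup axiom for $\mathscr{A}$ gives
\[
\mathscr{B}(fft_1t_2t_3)=\mathscr{A}(fft_1[\vec{t}]t_2[\vec{t}]t_3[\vec{t}])=\mathscr{A}(ft_1[\vec{t}]ft_2[\vec{t}]t_3[\vec{t}])=\mathscr{B}(ft_1ft_2t_3),
\]
so $\mathscr{B}$ satisfies the defining identity of an orderly semigroup.

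Next I would show every orderly semigroup $\mathscr{A}$ is weakly Ramsey. The language $\mathcal{L}=\{f\}$ contains a binary function symbol, so Theorem~\ref{1109a} applies: it suffices to produce, for each $X\subseteq\Vert\mathscr{A}\Vert$, some reduction of $\mathscr{A}$ that is pre-homogeneous for $X$. The trivial reduction $\mathscr{A}$ itself (witnessed by $\langle v_0,v_1,v_2,\dotsc\rangle$) already works: by Example~\ref{1211b}, $\mathscr{A}(t_1)=\mathscr{A}(t_2)$ whenever $t_1,t_2\in\OT(\mathcal{L})$ contain exactly the same variables, so vacuously $\mathscr{A}(t_1)\in X$ if and only if $\mathscr{A}(t_2)\in X$.

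To conclude, given any reduction $\mathscr{B}$ of $\mathscr{A}$ and any $X\subseteq\Vert\mathscr{A}\Vert$, the previous two steps say that $\mathscr{B}$ is itself an orderly semigroup and hence is weakly Ramsey. Applying this weakly Ramsey property to $X\cap\Vert\mathscr{B}\Vert$ produces a reduction $\mathscr{C}$ of $\mathscr{B}$ with $\Vert\mathscr{C}\Vert$ contained in or disjoint from $X\cap\Vert\mathscr{B}\Vert$; since $\Vert\mathscr{C}\Vert\subseteq\Vert\mathscr{B}\Vert$ by Remark~\ref{2009a}\textit{(1)}, this forces $\Vert\mathscr{C}\Vert$ to be contained in or disjoint from $X$, so $\mathscr{C}$ is homogeneous for $X$. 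There is no real obstacle beyond routine bookkeeping: all the combinatorial content (ultimately a Hindman-style argument) is packaged inside Theorem~\ref{1109a}, and the only substantive input here is the strong identification property of Example~\ref{1211b}, which renders pre-homogeneity automatic for orderly semigroups.
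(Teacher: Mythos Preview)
Your proposal is correct and follows precisely the route the paper intends: the corollary is placed immediately after Theorem~\ref{1109a} with no proof given, and your argument supplies exactly the expected details---pre-homogeneity is automatic by the identification property from Example~\ref{1211b}, so Theorem~\ref{1109a} yields weak Ramseyness, and the easy closure of orderly semigroups under reduction upgrades this to Ramseyness.
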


Finally, we present the analogue of Theorem~\ref{0910a} for orderly algebras.

\begin{theorem}\label{0910b}
Suppose $\mathcal{L}$ is a  finite language without unary function symbol and $\mathscr{A}$ is an orderly $\mathcal{L}$-algebra.
If $\mathscr{A}$ is Ramsey, then for every reduction $\mathscr{B}$ of $\mathscr{A}$, $n\in \omega$, and
$X\subseteq \Vert \mathscr{A}\Vert^n$, there exists a reduction $\mathscr{C}$ of $\mathscr{B}$ such that
$\Vert \mathscr{C}\Vert^n_<$ is either contained in or disjoint from $X$,
where
$$\Vert \mathscr{C}\Vert^n_<= \{\,(\mathscr{C}(t_1),\dotsc,\mathscr{C}(t_n) ) \mid t_1,\dotsc,t_n \in \OT(\mathcal{L})\text{ with } t_1<\dotsb<t_n   \,
\}.$$
\end{theorem}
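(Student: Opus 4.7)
The plan is to transfer the statement to the setting of $(A,\mathcal{F})$-style algebras and then quote Theorem~\ref{0910a} verbatim. By Theorem~\ref{0310a}, choose an $\mathcal{L}$-algebra $\mathfrak{A}=(A,\mathcal{F})$ with $A=\Vert\mathscr{A}\Vert$ and a sequence $\vec{a}\in {^\omega}\!A$ such that $\mathscr{A}=\mathfrak{A}_{\vec{a}}$. Since $\mathcal{L}$ is finite and contains no unary function symbol, $\mathcal{F}$ is finite and contains no unary operation, so the hypotheses of Theorem~\ref{0910a} are satisfied for $\mathfrak{A}$.

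Next, since $\mathscr{A}=\mathfrak{A}_{\vec{a}}$ is Ramsey, Proposition~\ref{0710c} gives that $\mathfrak{A}$ is Ramsey below $\vec{a}$. Given a reduction $\mathscr{B}$ of $\mathscr{A}$, Proposition~\ref{2008a}\textit{(2)} yields some $\vec{b}\leq_{\mathcal{F}}\vec{a}$ with $\mathscr{B}=\mathfrak{A}_{\vec{b}}$. Given $n\in\omega$ and $X\subseteq\Vert\mathscr{A}\Vert^n=A^n$, apply Theorem~\ref{0910a} to obtain $\vec{c}\leq_{\mathcal{F}}\vec{b}$ such that $[\FR_{\mathcal{F}}(\vec{c})]^n_<$ is either contained in or disjoint from $X$. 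Set $\mathscr{C}=\mathfrak{A}_{\vec{c}}$; by Proposition~\ref{2008a}\textit{(1)}, $\mathscr{C}$ is a reduction of $\mathfrak{A}_{\vec{b}}=\mathscr{B}$.

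It remains to verify that $\Vert\mathscr{C}\Vert^n_<$ coincides with $[\FR_{\mathcal{F}}(\vec{c})]^n_<$, which will finish the proof. This is a matter of unwinding definitions: for any $t_1<\dotsb<t_n$ in $\OT(\mathcal{L})=\OT(\mathcal{L}_{\mathcal{F}})$, one has $\mathscr{C}(t_i)=\mathfrak{A}_{\vec{c}}(t_i)=t_i^{\mathfrak{A}}[\vec{c}]$, so the two sets of tuples are literally equal. Hence $\Vert\mathscr{C}\Vert^n_<$ is contained in or disjoint from $X$, as required.

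The only place where any real content is used is the application of Theorem~\ref{0910a}, whose proof in turn depends on the full strength of $\mathscr{R}_{\vec{a}}(A,\mathcal{F})$ being a Ramsey space (via Theorem~\ref{1129b}). Thus there is no new combinatorial obstacle here; the mild bookkeeping point is to make sure that the dictionary between reductions of $\mathfrak{A}_{\vec{a}}$ and reductions of $\vec{a}$ supplied by Proposition~\ref{2008a} is applied consistently, and that the finiteness/no-unary hypotheses needed to invoke Theorem~\ref{0910a} are already built into the hypotheses of Theorem~\ref{0910b}.
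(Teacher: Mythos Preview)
Your proof is correct and follows essentially the same route as the paper's: represent $\mathscr{A}$ as $\mathfrak{A}_{\vec{a}}$ via Theorem~\ref{0310a}, translate the reduction $\mathscr{B}$ to some $\vec{b}\leq_{\mathcal{F}}\vec{a}$ via Proposition~\ref{2008a}, invoke Proposition~\ref{0710c} and then Theorem~\ref{0910a}, and finally identify $\Vert\mathfrak{A}_{\vec{c}}\Vert^n_<$ with $[\FR_{\mathcal{F}}(\vec{c})]^n_<$. Your write-up is in fact a bit more explicit than the paper's in checking the hypotheses of Theorem~\ref{0910a} and in citing Proposition~\ref{2008a}\textit{(1)} for why $\mathfrak{A}_{\vec{c}}$ is a reduction of $\mathscr{B}$.
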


\begin{proof}
Suppose $\mathscr{B}$ is a reduction of $\mathscr{A}$, $n\in \omega$, and
$X\subseteq \Vert \mathscr{A}\Vert^n$.
By Theorem~\ref{0310a}, $\mathscr{A}=\mathfrak{A}_{\vec{a}}$ for some $\mathcal{L}$-algebra $\mathfrak{A}=(\Vert \mathscr{A}\Vert, \mathcal{F})$ and
$\vec{a}\in {^\omega}\Vert \mathscr{A}\Vert$. Hence, by Proposition~\ref{2008a}, $\mathscr{B}=\mathfrak{A}_{\vec{b}}$
for some $\vec{b}\leq_{\mathcal{F}}\vec{a}$.
Since $\mathscr{A}$ is Ramsey, by Theorem~\ref{0710c}, $\mathfrak{A}$ is Ramsey below $\vec{a}$.
By Theorem~\ref{0910a}, there exists $\vec{c}\leq_{\mathcal{F}}\vec{b}$ such that $[\FR_{\mathcal{F}}  (\vec{c})]^n_<$ is either contained in or
disjoint from $X$. Then $\mathfrak{A}_{\vec{c}}$ is a reduction of $\mathscr{B}$ and it remains to note that
$\Vert \mathfrak{A}_{\vec{c}}\Vert^n_<= [\FR_{\mathcal{F}}  (\vec{c})]^n_<$.
\end{proof}

\section{A Case Study}
 In addition to
applying the notion of orderly $\mathcal{L}$-algebras to the study of Ramsey algebras, the case study in this section is intended to demonstrate how the
notion in question facilitates the study of Ramsey algebra. 

Throughout this section, suppose $\mathcal{L}=\{f\}$, where $f$ is binary, and  fix an orderly $\mathcal{L}$-algebra $\mathscr{A}$.
We will define an orderly $\mathcal{L}$-algebra, denoted $\sharp\mathscr{A}$, with universe a subset of $\Vert \mathscr{A}\Vert \times \Vert
\mathscr{A}\Vert$ and show that $\sharp\mathscr{A}$ is Ramsey provided that $\mathscr{A}$ is Ramsey. To do this, every orderly term $t$ of $\mathcal{L}$
is associated with a pair of (orderly) terms of $\mathcal{L}$, denoted $(t^x,t^y)$, defined inductively in the following way:
$$(v_i^x,v_i^y) =(v_{2i}, v_{2i+1}) \text{ for all } i\in \omega;$$
$$( (fst)^x, (fst)^y)= (fs^xs^y, ft^xt^y) \text{ for all } s,t\in \OT(\mathcal{L}) \text{ with } s<t.$$

\begin{claim}
$t^x,t^y\in \OT(\mathcal{L})$ and $t^x<t^y$ for every $t\in \OT(\mathcal{L})$.
\end{claim}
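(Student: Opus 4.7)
The plan is to prove the claim by induction on the complexity of $t \in \OT(\mathcal{L})$. The subtle point is that the naive induction hypothesis, which asserts only that $s^x$ and $s^y$ are orderly and that $s^x < s^y$, does not suffice to push the inductive step for $t^x < t^y$ through. I would therefore strengthen the hypothesis to keep track of the index of the first variable of $t^x$ and the index of the last variable of $t^y$ in terms of the first and last variable indices of $t$ itself.

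For each orderly term $t$, let $\mu(t)$ denote the index of its first (leftmost) variable and $\nu(t)$ the index of its last (rightmost) variable. I will prove simultaneously by induction on $t$ the three statements: (i) $t^x, t^y \in \OT(\mathcal{L})$; (ii) $t^x < t^y$; (iii) $\mu(t^x) = 2\mu(t)$ and $\nu(t^y) = 2\nu(t) + 1$. The base case $t = v_i$ is immediate, since $t^x = v_{2i}$ and $t^y = v_{2i+1}$.

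For the inductive step, suppose $t = fs_1 s_2$ with $s_1 < s_2$, so that $t^x = fs_1^x s_1^y$ and $t^y = fs_2^x s_2^y$. Applying the induction hypothesis to $s_1$ and $s_2$ tells us that each of $s_j^x$ and $s_j^y$ is an orderly term and that $s_j^x < s_j^y$, which immediately makes both $t^x$ and $t^y$ orderly. Statement (iii) for $t$ follows by inspection: the first variable of $t^x$ is the first variable of $s_1^x$, so $\mu(t^x) = \mu(s_1^x) = 2\mu(s_1) = 2\mu(t)$, and symmetrically $\nu(t^y) = \nu(s_2^y) = 2\nu(s_2) + 1 = 2\nu(t) + 1$. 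For (ii), using (iii) on the subterms, $\nu(t^x) = \nu(s_1^y) = 2\nu(s_1) + 1$ while $\mu(t^y) = \mu(s_2^x) = 2\mu(s_2)$; the hypothesis $s_1 < s_2$ gives $\nu(s_1) + 1 \leq \mu(s_2)$, hence $2\nu(s_1) + 1 < 2\mu(s_2)$, which is exactly $t^x < t^y$.

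The main obstacle, and the reason for packaging (iii) into the induction, is that without it there is no way to relate the last variable of $s_1^y$ to the first variable of $s_2^x$; the naive hypothesis only controls the internal ordering within each $s_j^x$ and $s_j^y$. Tracking the doubling behavior of the boundary indices is precisely what converts the unit gap in $s_1 < s_2$ into the strictly larger gap between $\nu(s_1^y)$ and $\mu(s_2^x)$ that the claim demands.
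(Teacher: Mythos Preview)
Your proof is correct and follows essentially the same strategy as the paper: both recognize that the bare claim is not inductively self-sustaining and strengthen the induction hypothesis to track how variable indices double under $t \mapsto (t^x,t^y)$. The paper phrases the strengthening as ``$v_i$ occurs in $t$ if and only if each of $v_{2i}$ and $v_{2i+1}$ occurs in either $t^x$ or $t^y$'' (tracking the full variable set), whereas you track only the extremal indices via $\mu(t^x)=2\mu(t)$ and $\nu(t^y)=2\nu(t)+1$; your version is the minimal bookkeeping needed, but the idea and the inductive mechanics are the same.
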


\begin{proof}
In fact, $v_i$ occurs in $t$ if and only if each
of $v_{2i}$ and $v_{2i+1}$ occurs in either $t^x$ or $t^y$.
It is straightforward to prove this stronger claim by induction on the complexity of orderly terms.
\end{proof}

Now, for every $t\in \OT(\mathcal{L})$, define $\sharp\mathscr{A}(t)=(\mathscr{A}(t^x), \mathscr{A}(t^y))$.

\begin{claim}
$\sharp\mathscr{A}$ is an orderly $\mathcal{L}$-algebra and $\Vert \sharp \mathscr{A}\Vert
\subseteq \Vert \mathscr{A}\Vert^2_<$.
\end{claim}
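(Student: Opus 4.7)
The plan is to handle the two assertions separately, with the universe inclusion being essentially a one-line consequence of the preceding claim, and the orderly algebra property being a direct verification of Definition~\ref{1511a} that reduces to applying the orderly property of $\mathscr{A}$ componentwise.

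First I would dispose of the inclusion $\Vert\sharp\mathscr{A}\Vert \subseteq \Vert\mathscr{A}\Vert^2_<$. By construction, every element of $\Vert\sharp\mathscr{A}\Vert$ has the form $(\mathscr{A}(t^x), \mathscr{A}(t^y))$ for some $t\in\OT(\mathcal{L})$. The preceding claim tells us $t^x, t^y\in \OT(\mathcal{L})$ with $t^x<t^y$, so this pair meets the defining requirement for membership in $\Vert\mathscr{A}\Vert^2_<$.

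For the orderly algebra property, since $\mathcal{L}=\{f\}$ with $f$ binary, the only condition to check is: for orderly terms with $t_1<t_2$, $t_1'<t_2'$, and $\sharp\mathscr{A}(t_k)=\sharp\mathscr{A}(t_k')$ for $k=1,2$, one has $\sharp\mathscr{A}(ft_1t_2)=\sharp\mathscr{A}(ft_1't_2')$. Unpacking the hypothesis componentwise gives $\mathscr{A}(t_k^x)=\mathscr{A}(t_k'^x)$ and $\mathscr{A}(t_k^y)=\mathscr{A}(t_k'^y)$ for $k=1,2$. Applying the recursive definition of the superscripts yields
\[
\sharp\mathscr{A}(ft_1t_2)=\bigl(\mathscr{A}(ft_1^xt_1^y),\,\mathscr{A}(ft_2^xt_2^y)\bigr),
\]
and similarly for the primed version. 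By the previous claim, $t_1^x<t_1^y$, $t_1'^x<t_1'^y$, $t_2^x<t_2^y$, and $t_2'^x<t_2'^y$, so the orderly property of $\mathscr{A}$ applies to each of the two components and gives $\mathscr{A}(ft_k^xt_k^y)=\mathscr{A}(ft_k'^xt_k'^y)$ for $k=1,2$. Concatenating the two equalities into a pair gives the desired identity.

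There is no real obstacle; the main conceptual point is simply that the orderly property of $\mathscr{A}$ transfers to $\sharp\mathscr{A}$ because the inductive definition of $(\,\cdot\,)^x$ and $(\,\cdot\,)^y$ builds $f$ on top of orderly-comparable pairs, so only the already-established inequality $t_k^x<t_k^y$ is needed (the stronger chain $t_1^x<t_1^y<t_2^x<t_2^y$ would also hold but is not required here).
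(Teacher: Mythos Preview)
Your proof is correct and follows essentially the same approach as the paper's: both verify the orderly condition by unpacking $\sharp\mathscr{A}(t_k)=\sharp\mathscr{A}(t_k')$ into componentwise equalities and then applying the orderly property of $\mathscr{A}$ to each component using $t_k^x<t_k^y$, with the universe inclusion noted as an immediate consequence of the preceding claim. The only differences are notational (the paper writes $s_1,s_2,t_1,t_2$ where you write $t_1,t_2,t_1',t_2'$) and expository (you spell out the inclusion, the paper just calls it immediate).
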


\begin{proof}
Suppose $s_1<t_1$, $s_2<t_2$, $\sharp\mathscr{A}(s_1)=\sharp\mathscr{A}(s_2)$, and $\sharp\mathscr{A}(t_1)=\sharp\mathscr{A}(t_2)$. We need  to show
that $\sharp\mathscr{A}(fs_1t_1)=
\sharp \mathscr{A}(fs_2t_2)$.
From  $\sharp\mathscr{A}(s_1)= \sharp  \mathscr{A}(s_2)$, it follows that
$\mathscr{A}(s_1^x)=\mathscr{A}(s_2^x)$ and $\mathscr{A}(s_1^y)= \mathscr{A}(s_2^y)$. Since $\mathscr{A}$ is an orderly \mbox{$\mathscr{L}$-algebra},
$\mathscr{A}(fs_1^xs_1^y)=\mathscr{A}(fs_2^xs_2^y)$. Similarly, $\mathscr{A}(ft_1^xt_1^y)=\mathscr{A}(ft_2^xt_2^y)$.
Therefore,  $\sharp\mathscr{A}(fs_1t_1)= (\mathscr{A}(fs_1^xs_1^y), \mathscr{A}(ft_1^xt_1^y))=    (\mathscr{A}(fs_2^xs_2^y), \mathscr{A}(ft_2^xt_2^y))=
\sharp \mathscr{A}(fs_2t_2)$ as required.
The second part is immediate.
\end{proof}

\begin{claim}\label{1010a}
Suppose $\mathscr{B}$ is a reduction of $\sharp \mathscr{A}$ witnessed by $\vec{t}=\langle  t_0,t_1,t_2,\dotsc\rangle$.
Let $\mathscr{C}$ be the reduction of $\mathscr{A}$ witnessed by $\vec{t'}=\langle ft_0^xt_0^y,ft_1^xt_1^y,ft_2^xt_2^y,\dotsc\rangle$.
 If $\mathscr{D}$ is a reduction of $\mathscr{C}$,\footnote{This requirement on $\mathscr{D}$ is essential. For example, say $\mathscr{B}=\sharp
 \mathscr{A}$. If
 $\mathscr{D}$ is a reduction of $\mathscr{A}$ witnessed by $v_0<v_2<v_4<\dotsb$, then $\sharp \mathscr{D}$ need not be a reduction of $\mathscr{B}$.}
then $\sharp \mathscr{D}$ is a reduction of $\mathscr{B}$.
\end{claim}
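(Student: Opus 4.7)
The plan is to exhibit a witness $\vec{w}$ for $\sharp\mathscr{D}$ being a reduction of $\mathscr{B}$ directly. Let $\vec{u}=\langle u_0,u_1,u_2,\dots\rangle$ witness $\mathscr{D}$ as a reduction of $\mathscr{C}$; define $\vec{w}$ by $w_i:=fu_{2i}u_{2i+1}$. Admissibility of $\vec{w}$ is immediate since $u_{2i}<u_{2i+1}<u_{2i+2}$. I would then verify $\sharp\mathscr{D}(s)=\mathscr{B}(s[\vec{w}])$ for every $s\in\OT(\mathcal{L})$ by induction on the complexity of $s$.

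For the inductive step, I would invoke Theorem~\ref{0310a} to write $\mathscr{A}=\mathfrak{A}_{\vec{a}}$ for some $\mathcal{L}$-algebra $\mathfrak{A}=(\Vert\mathscr{A}\Vert,f^{\mathfrak{A}})$; by Proposition~\ref{2008a}(2), also $\mathscr{D}=\mathfrak{A}_{\vec{d}}$ with $\vec{d}(k)=\mathscr{D}(v_k)$. A direct computation unpacking $(fpq)^x=fp^xp^y$ and $(fpq)^y=fq^xq^y$ produces two matching ``doubled'' operation rules
\[
\mathscr{B}(fpq)=g(\mathscr{B}(p),\mathscr{B}(q)),\qquad \sharp\mathscr{D}(fab)=g(\sharp\mathscr{D}(a),\sharp\mathscr{D}(b)),
\]
valid whenever $p<q$ and $a<b$, where $g((x_1,x_2),(y_1,y_2)):=(f^{\mathfrak{A}}(x_1,x_2),f^{\mathfrak{A}}(y_1,y_2))$. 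Since $\vec{w}$ is admissible we have $a[\vec{w}]<b[\vec{w}]$, so applying the inductive hypothesis to $a$ and $b$ closes the step.

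The base case $s=v_i$ reduces, after unwinding $\sharp\mathscr{D}(v_i)=(\mathscr{A}(u_{2i}[\vec{t'}]),\mathscr{A}(u_{2i+1}[\vec{t'}]))$ and $\mathscr{B}(w_i)=\sharp\mathscr{A}(fu_{2i}[\vec{t}]u_{2i+1}[\vec{t}])$, to the following Key Lemma: for every $r\in\OT(\mathcal{L})$,
\[
\mathscr{A}(r[\vec{t'}])=\mathscr{A}(f(r[\vec{t}])^x(r[\vec{t}])^y).
\]
I would prove the Key Lemma by a subsidiary induction on $r$. For $r=v_k$ both sides are literally the same term $ft_k^xt_k^y$, since $t'_k=ft_k^xt_k^y$ while $(v_k[\vec{t}])^x=t_k^x$ and $(v_k[\vec{t}])^y=t_k^y$. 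For $r=fab$, one unfolds $(r[\vec{t}])^x=f(a[\vec{t}])^x(a[\vec{t}])^y$ from the definition and uses the orderly property of $\mathscr{A}$ to rewrite both sides via $f^{\mathfrak{A}}$; the inductive hypothesis on $a$ and $b$ then closes.

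The main obstacle is the Key Lemma: it is precisely the technical identity that justifies the definition $t'_k=ft_k^xt_k^y$, asserting that substitution by $\vec{t'}$ under $\mathscr{A}$ matches performing the $\sharp$-doubling via $\vec{t}$ and then recombining with one extra $f$. Everything else is bookkeeping that transports this single identity through a clean structural induction via the common coordinate-wise operation $g$.
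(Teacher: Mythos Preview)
Your proof is correct and uses the same witness $\vec{w}=\vec{u'}$ as the paper. Your Key Lemma is the semantic shadow of the paper's first substitution identity, which is proved there at the purely \emph{syntactic} level: $s[\vec{t'}]=f(s[\vec{t}])^x(s[\vec{t}])^y$ as literal terms, not merely equal under $\mathscr{A}$. The paper then proves a second syntactic identity, $s[\vec{u'}]=(fs^xs^y)[\vec{u}]$, and chains the two to compute $\sharp\mathscr{D}(s)=\mathscr{B}(s[\vec{u'}])$ directly, without ever invoking Theorem~\ref{0310a} or introducing the auxiliary operation $g$. Your route through $\mathfrak{A}$ and $g$ is not wrong, but it is an avoidable detour: once you observe that your Key Lemma holds syntactically (your own base case already shows this, and the inductive step works term-for-term), the appeal to an ambient algebra becomes unnecessary and the whole argument stays inside the orderly-term calculus.
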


\begin{proof}
First of all, the following substitution property can be proved by induction on the complexity of orderly terms:
$$s[\vec{t'}]=f(s[\vec{t}])^x (s[\vec{t}])^y \text{ for all } s\in \OT(\mathcal{L}).$$
For the base step, $v_i[\vec{t'}]= ft_i^xt_i^y =f(v_i[\vec{t}])^x (v_i[\vec{t}])^y $.
For the induction step,
\begin{multline*}
(fss')[\vec{t'}]=fs[\vec{t'}]s'[\vec{t'}]
= f  f(s[\vec{t}])^x (s[\vec{t}])^y f(s'[\vec{t}])^x (s'[\vec{t}])^y=\\
 f \big( fs[\vec{t}]s'[\vec{t}]   \big)^x\big( fs[\vec{t}]s'[\vec{t}] \big)^y
 = f \big(  (fss')[\vec{t}]   \big)^x\big( (fss')[\vec{t}] \big)^y.
\end{multline*}

Suppose $\mathscr{D}$ is a reduction of $\mathscr{C}$ witnessed by $\vec{u}=\langle u_0,u_1,u_2,\dotsc\rangle$.
Let $\vec{u'}=\langle fu_0u_1,fu_2u_3,fu_4u_5,\dotsc\rangle$.
Again, it can be proved similarly by induction on the complexity of orderly terms that
$$ s[\vec{u'}]= (fs^xs^y)[\vec{u}] \text{ for all } s\in \OT(\mathcal{L}). $$

Now, we will show that $\sharp \mathscr{D}$ is a reduction of $\mathscr{B}$ witnessed by $\vec{u'}$.
Fix $s\in \OT(\mathcal{L})$.
Then by definition, $\sharp \mathscr{D} (s)= ( \mathscr{D}(s^x), \mathscr{D}(s^y))
=( \mathscr{C}(s^x[\vec{u}]), \mathscr{C}(s^y[\vec{u}] ))
= (\mathscr{A} (s^x[\vec{u}] [\vec{t'}  ] ), \mathscr{A} (s^y[\vec{u}] [\vec{t'}  ] )     )$.
By the first substitution property,
$s^x[\vec{u}] [\vec{t'}  ]=f(s^x[\vec{u}][\vec{t}])^x (s^x[\vec{u}][\vec{t}])^y = \big(f (s^x[\vec{u}][\vec{t}])(s^y[\vec{u}][\vec{t}])
\big)^x =   \big(  (fs^xs^y)[\vec{u}][\vec{t}]  \big)^x         $.
Similarly, $s^y[\vec{u}] [\vec{t'}  ]= \big(  (fs^xs^y)[\vec{u}][\vec{t}]  \big)^y $.
Therefore,
$\sharp \mathscr{D} (s) = \sharp \mathscr{A}\big(  (fs^xs^y)[\vec{u}]   [\vec{t}]                       \big)=
\mathscr{B}\big( (fs^xs^y)[\vec{u}] \big) =\mathscr{B}(s[\vec{u'}]    )$ as required.
\end{proof}

\begin{theorem}\label{0510a}
If $\mathscr{A}$ is Ramsey, then $\sharp \mathscr{A}$ is Ramsey.
\end{theorem}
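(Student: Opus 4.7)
The plan is to combine Claim~\ref{1010a} with the ``higher-dimensional'' Ramsey property for orderly algebras (Theorem~\ref{0910b}) applied to $\mathscr{A}$ at dimension $n=2$. The rough picture is that a subset $X$ of $\Vert\sharp\mathscr{A}\Vert$ naturally lives inside $\Vert\mathscr{A}\Vert^2_<$, so a reduction of $\mathscr{A}$ whose pairs of distinct values are monochromatic for $X$ should, after applying the $\sharp$ construction, yield a reduction of $\sharp\mathscr{A}$ that is homogeneous for $X$.

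More concretely, fix a reduction $\mathscr{B}$ of $\sharp\mathscr{A}$, say witnessed by $\vec{t}=\langle t_0,t_1,t_2,\dotsc\rangle$, and fix $X\subseteq \Vert\sharp\mathscr{A}\Vert$. First I would form the reduction $\mathscr{C}$ of $\mathscr{A}$ witnessed by $\vec{t'}=\langle ft_0^xt_0^y,ft_1^xt_1^y,\dotsc\rangle$, exactly as in the statement of Claim~\ref{1010a}. Because $\mathscr{A}$ is Ramsey and $\mathscr{C}$ is a reduction of $\mathscr{A}$, Remark~\ref{2110a}\textit{(3)} tells us that $\mathscr{C}$ is Ramsey as well. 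Since $\mathcal{L}=\{f\}$ is finite and has no unary symbol, Theorem~\ref{0910b} applies to $\mathscr{A}$ at $n=2$: viewing $X$ as a subset of $\Vert\mathscr{A}\Vert^2$ (which is legitimate because $\Vert\sharp\mathscr{A}\Vert\subseteq \Vert\mathscr{A}\Vert^2_<$), it produces a reduction $\mathscr{D}$ of $\mathscr{C}$ such that $\Vert\mathscr{D}\Vert^2_<$ is either contained in or disjoint from $X$.

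Now I would invoke Claim~\ref{1010a} to conclude that $\sharp\mathscr{D}$ is a reduction of $\mathscr{B}$, and hence of $\sharp\mathscr{A}$. To finish, I need only observe that $\Vert\sharp\mathscr{D}\Vert\subseteq \Vert\mathscr{D}\Vert^2_<$: for every $s\in \OT(\mathcal{L})$ we have $s^x<s^y$ by the first claim of this section, so $\sharp\mathscr{D}(s)=(\mathscr{D}(s^x),\mathscr{D}(s^y))$ belongs to $\Vert\mathscr{D}\Vert^2_<$. Consequently, whichever side of the dichotomy $\Vert\mathscr{D}\Vert^2_<$ falls on with respect to $X$, $\Vert\sharp\mathscr{D}\Vert$ inherits it, and so $\sharp\mathscr{D}$ is homogeneous for $X$ as required.

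The only subtle point, and what I regard as the main obstacle, is the correct matching of the two ``witness'' sequences $\vec{t}$ and $\vec{t'}$; without Claim~\ref{1010a} one might be tempted to pick an arbitrary reduction of $\mathscr{A}$ and try to push it back through $\sharp$, which as the footnote in Claim~\ref{1010a} warns need not produce a reduction of $\mathscr{B}$. The key insight is that we must first move from $\mathscr{B}$ to the companion reduction $\mathscr{C}$ of $\mathscr{A}$, apply the Ramsey property there, and only then apply $\sharp$, since the compatibility that makes $\sharp\mathscr{D}$ a reduction of $\mathscr{B}$ is exactly what Claim~\ref{1010a} guarantees.
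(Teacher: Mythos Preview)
Your proposal is correct and follows essentially the same route as the paper's proof: pass from $\mathscr{B}$ to the companion reduction $\mathscr{C}$ of $\mathscr{A}$ via $\vec{t'}$, apply Theorem~\ref{0910b} at $n=2$ to obtain $\mathscr{D}$, then use Claim~\ref{1010a} and the inclusion $\Vert\sharp\mathscr{D}\Vert\subseteq\Vert\mathscr{D}\Vert^2_<$ to conclude. The only remark is that your invocation of Remark~\ref{2110a}\textit{(3)} (that $\mathscr{C}$ is Ramsey) is superfluous, since Theorem~\ref{0910b} already applies directly to $\mathscr{A}$ and its reduction $\mathscr{C}$; the paper simply omits that detour.
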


\begin{proof}
Suppose $\mathscr{B}$ is a reduction of $\sharp\mathscr{A}$ witnessed by $\vec{t}=\langle  t_0,t_1,t_2,\dotsc\rangle$ and $X\subseteq \Vert
\sharp\mathscr{A}\Vert$.
Let $\mathscr{C}$ be the reduction of $\mathscr{A}$ witnessed by $\vec{t'}=\langle ft_0^xt_0^y,ft_1^xt_1^y,ft_2^xt_2^y,\dotsc\rangle$.
Since $\mathscr{A}$ is Ramsey and $X\subseteq \Vert \mathscr{A}\Vert^2$, by Theorem~\ref{0910b}, choose a reduction $\mathscr{D}$ of $\mathscr{C}$ such
that $\Vert \mathscr{D}\Vert^2_<$ is either contained in disjoint from $X$. By the previous claim, $\sharp \mathscr{D}$ is a reduction of $\mathscr{B}$.
Finally, since $\Vert \sharp\mathscr{D}\Vert \subseteq \Vert \mathscr{D}\Vert^2_<$, it follows that $\sharp \mathscr{D}$ is homogeneous for $X$.
\end{proof}

\begin{theorem}\label{0107b}
Suppose $g$ is a binary operation on a set $A$ and $h$ is an operation on $A^2$ defined by
$$h((x_1,y_1),  (x_2,y_2)  )=(g(x_1,y_1), g(x_2, y_2)).$$
If $(A,g)$ is a Ramsey algebra, then $(A^2,h)$ is also a Ramsey algebra.
\end{theorem}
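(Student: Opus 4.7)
The plan is to derive the theorem from Theorem~\ref{0510a} via the equivalence of Theorem~\ref{1908a}. Let $\mathfrak{B} = (A^2, h)$, viewed as an $\mathcal{L}$-algebra for $\mathcal{L} = \{f\}$ with $f$ binary. By Theorem~\ref{1908a}, it suffices to show that $\mathfrak{B}_{\vec{p}}$ is a Ramsey orderly $\mathcal{L}$-algebra for every $\vec{p} \in {}^\omega\!(A^2)$. My strategy is to identify $\mathfrak{B}_{\vec{p}}$ with an orderly algebra of the form $\sharp\mathfrak{A}_{\vec{a}^*}$ for a suitably interleaved sequence $\vec{a}^*$ in $A$ (with $\mathfrak{A} = (A, g)$), and then invoke Theorem~\ref{0510a}.

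Fix $\vec{p}$ and write $\vec{p}(i) = (a_i, b_i)$. Form the interleaved sequence $\vec{a}^* = \langle a_0, b_0, a_1, b_1, a_2, b_2, \dotsc\rangle \in {}^\omega\!A$. The key step is to verify by induction on the complexity of orderly terms that $\sharp\mathfrak{A}_{\vec{a}^*} = \mathfrak{B}_{\vec{p}}$. At variables,
\[\sharp\mathfrak{A}_{\vec{a}^*}(v_i) = (\mathfrak{A}_{\vec{a}^*}(v_{2i}), \mathfrak{A}_{\vec{a}^*}(v_{2i+1})) = (a_i, b_i) = \vec{p}(i) = \mathfrak{B}_{\vec{p}}(v_i).\]
For the inductive step, both orderly algebras obey the same recursion $\mathscr{X}(fst) = h(\mathscr{X}(s), \mathscr{X}(t))$: this is immediate from the term semantics for $\mathfrak{B}_{\vec{p}}$, and for $\sharp\mathfrak{A}_{\vec{a}^*}$ it follows by unwinding the definitions to obtain
\[\sharp\mathfrak{A}_{\vec{a}^*}(fst) = \bigl(g(\mathfrak{A}_{\vec{a}^*}(s^x), \mathfrak{A}_{\vec{a}^*}(s^y)),\, g(\mathfrak{A}_{\vec{a}^*}(t^x), \mathfrak{A}_{\vec{a}^*}(t^y))\bigr),\]
which by the definition of $h$ matches $h(\sharp\mathfrak{A}_{\vec{a}^*}(s), \sharp\mathfrak{A}_{\vec{a}^*}(t))$. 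Combined with the inductive hypothesis, this gives the desired identity.

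With the identity in hand, the remainder is routine. Since $(A, g)$ is Ramsey, Theorem~\ref{1908a} gives that $\mathfrak{A}_{\vec{a}^*}$ is Ramsey; Theorem~\ref{0510a} then gives that $\sharp\mathfrak{A}_{\vec{a}^*}$ is Ramsey, and by the identification this is to say that $\mathfrak{B}_{\vec{p}}$ is Ramsey. Since $\vec{p}$ was arbitrary, the other direction of Theorem~\ref{1908a} yields that $\mathfrak{B} = (A^2, h)$ is a Ramsey algebra. The only step with any real content is the identification $\sharp\mathfrak{A}_{\vec{a}^*} = \mathfrak{B}_{\vec{p}}$, and I do not anticipate a serious obstacle beyond carefully bookkeeping the variable interleaving in the induction.
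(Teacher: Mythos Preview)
Your proposal is correct and follows essentially the same approach as the paper: interleave the given sequence $\vec{p}\in{}^\omega(A^2)$ into a sequence in $A$, prove by induction on orderly terms that $\mathfrak{B}_{\vec{p}}=\sharp\mathfrak{A}_{\vec{a}^*}$, and then apply Theorems~\ref{1908a} and~\ref{0510a}. The only cosmetic difference is that the paper carries out the inductive step as an explicit chain of equalities rather than invoking the common recursion $\mathscr{X}(fst)=h(\mathscr{X}(s),\mathscr{X}(t))$.
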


\begin{proof}
Let $\mathfrak{A}$ and $\mathfrak{B}$ denote the   $\mathcal{L}$-algebras $(A,g)$ and $(A^2,h)$ respectively.  Suppose $\vec{b}=\langle (x_i,
y_i)\rangle_{i \in \omega}\in {^\omega}\!(A^2)$.
Let $\vec{a}=\langle x_0,y_0, x_1,y_1,x_2,y_2,\dotsc\rangle$.
By Theorems~\ref{1908a} and \ref{0510a}, it suffices to show that
$\mathfrak{B}_{\vec{b}}=\sharp \mathfrak{A}_{\vec{a}}$.
We prove this by induction on the complexity of orderly terms. For the base case, $\mathfrak{B}_{\vec{b}}(v_i)= (x_i,y_i)=
(\mathfrak{A}_{\vec{a}}(v_{2i}), \mathfrak{A}_{\vec{a}}(v_{2i+1}))=\sharp\mathfrak{A}_{\vec{a}}(v_i)$. For the induction step,
consider $fst\in \OT(\mathcal{L})$.
By the induction hypothesis, $s^{\mathfrak{B}}[\vec{b}]=\mathfrak{B}_{\vec{b}}[s]
= \sharp\mathfrak{A}_{\vec{a}}(s)
=\big(\mathfrak{A}_{\vec{a}}(s^x),\mathfrak{A}_{\vec{a}}(s^y)\big)=
\big((s^x)^{\mathfrak{A}}[\vec{a}], (s^y)^{\mathfrak{A}}[\vec{a}]   \big)$.
Similarly, we have $t^{\mathfrak{B}}[\vec{b}]=\big((t^x)^{\mathfrak{A}}[\vec{a}], (t^y)^{\mathfrak{A}}[\vec{a}]   \big)$.
Therefore, \linebreak
$\mathfrak{B}_{\vec{b}}(fst)= f^{\mathfrak{B}}(s^{\mathfrak{B}}[\vec{b}], t^{\mathfrak{B}}[\vec{b}]     )
= h\Big(\, \big((s^x)^{\mathfrak{A}}[\vec{a}], (s^y)^{\mathfrak{A}}[\vec{a}]   \big)\, , \,    \big((t^x)^{\mathfrak{A}}[\vec{a}],
(t^y)^{\mathfrak{A}}[\vec{a}]   \big) \, \Big)
$. By the definition of $h$, this equals $\Big( \,  g\big( (s^x)^{\mathfrak{A}}[\vec{a}], (s^y)^{\mathfrak{A}}[\vec{a}] \big)\, ,\,   g\big(
(s^x)^{\mathfrak{A}}[\vec{a}], (s^y)^{\mathfrak{A}}[\vec{a}]   \big)\, \Big)
= \big( (fs^xs^y)^{\mathfrak{A}}[\vec{a}], (ft^xt^y)^{\mathfrak{A}}[\vec{a}]  \big)
= $
$( \mathfrak{A}_{\vec{a}}( fs^xs^y  )  ,   \mathfrak{A}_{\vec{a}}(ft^xt^y  ) )=\sharp \mathfrak{A}_{\vec{a}} (fst) $.
\end{proof}

\begin{corollary}\label{1021a}
There exists a Ramsey algebra $(A,f)$ where $f$ is a nowhere associative binary operation, meaning $f( f(a,b),c)\neq f(a,f(b,c))$ for every
$(a, b, c)\in A^3$.
\end{corollary}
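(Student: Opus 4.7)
The plan is to apply Theorem~\ref{0107b} to a suitably chosen semigroup. I would take $(A,g) = (\mathbb{N},+)$, which is a semigroup and hence a Ramsey algebra by Theorem~\ref{050613a}. Defining $h$ on $A^2$ as in Theorem~\ref{0107b}, namely $h((x_1,y_1),(x_2,y_2)) = (x_1+y_1,\,x_2+y_2)$, Theorem~\ref{0107b} immediately yields that $(A^2,h)$ is a Ramsey algebra.

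It then remains only to verify that this $h$ is nowhere associative. A short computation gives
\[
h\bigl(h((x_1,y_1),(x_2,y_2)),(x_3,y_3)\bigr) = \bigl(x_1+y_1+x_2+y_2,\ x_3+y_3\bigr),
\]
whereas
\[
h\bigl((x_1,y_1),h((x_2,y_2),(x_3,y_3))\bigr) = \bigl(x_1+y_1,\ x_2+y_2+x_3+y_3\bigr).
\]
For these pairs to agree in their first coordinates, $x_2+y_2$ would have to equal $0$, which is impossible since $\mathbb{N}$ consists of positive integers. Hence the two bracketings always differ, so $h$ is nowhere associative on $A^2$.

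There is essentially no obstacle: Theorem~\ref{0107b} carries all of the Ramsey-theoretic content, and the ``nowhere associative'' property reduces to the trivial observation that left- and right-bracketing under $h$ route different coordinates of the three arguments into different output slots, so positivity of the entries guarantees the outputs never coincide. Any semigroup in which $g(x,y)$ always differs from both $x$ and $y$ would work equally well; $(\mathbb{N},+)$ is simply the most economical choice.
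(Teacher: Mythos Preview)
Your proof is correct and is essentially identical to the paper's own argument: both take $(\mathbb{N},+)$ as the base semigroup, apply Theorem~\ref{0107b} to obtain the Ramsey algebra $(\mathbb{N}^2,h)$ with $h((x_1,y_1),(x_2,y_2))=(x_1+y_1,x_2+y_2)$, and then observe that $h$ is nowhere associative. You have merely spelled out the nowhere-associativity verification that the paper leaves as ``easy to verify.''
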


\begin{proof}
Let $g((x_1,y_1), (x_2,y_2))=(x_1+y_1,x_2+y_2)$ for all $(x_1,y_1), (x_2,y_2)\in \mathbb{N}^2$. It is easy to verify that $g$ is  nowhere
 associative. Since $(\mathbb{N},+)$ is a Ramsey algebra for being a semigroup, by Theorem~\ref{0107b}, $(\mathbb{N}^2, g)$ is a Ramsey algebra.
\end{proof}

The results in this section can be extended analogously to the $n$-ary case.

\section{Concluding Remarks}
The paper begins with a brief historical account of the subject, followed by a formal introduction to the notion of a Ramsey algebra and its connection
with Ramsey spaces.

In an attempt to facilitate the study of the characterization of Ramsey algebras, we have introduced the notion of an orderly $\mathcal{L}$-algebra
based on the observation that whether or not an algebra is a Ramsey algebra can be cast in terms of infinite sequences in the underlying set of the algebra. This observation culminates in the notion of an orderly semigroup, an orderly algebra every semigroup would induce. Such a sufficient condition for an orderly algebra to be Ramsey is contrasted with the fact that even algebras closely resembling semigroups can fail to be Ramsey (cf. \cite{rdz16}), further suggesting that the crucial aspect that determines if an algebra is Ramsey lies in sequences and their reduction
properties.

We ended our paper with a case study to demonstrate the facility afforded by orderly algebras in the study of Ramsey algebra. We conclude the paper with
an invitation to the readers to pursue the study of Ramsey algebras towards a their characterization, particularly to apply the
notion of Ramsey orderly algebras in such a study.





\section*{Acknowledgment}

The notion of Ramsey orderly algebra can be attributed solely to the first author. The authors gratefully acknowledge the support by an FRGS grant
No.~203/PMATHS/6711464 of the Ministry of Education, Malaysia and Universiti Sains Malaysia.

\thebibliography{99}

\bibitem{tC88} T.J.~Carlson, Some unifying principles in Ramsey theory,  Discrete Mathematics~68 (1988), 117--169.
\bibitem{CS84} T.J.~Carlson and S.~G.~Simpson, A dual form of Ramsey's theorem, Adv. in Math.~53 (1984), 265-290.



\bibitem{eE74} E.~Ellentuck, A new proof that analytic sets are Ramsey, J. Symb. Logic~39 (1974), 163-165.




\bibitem{GR71} R.~L.~Graham and B.~L.~Rothschild, Ramsey's theorem for $n$-parameter sets, Trans.~Amer.~Math.~Soc.~{154} (1971), 257-292.

\bibitem{HJ63} A.W.~Hales and R.I.~Jewett, Regularity and positional games, Trans. Amer. Math. Soc.~124 (1966), 360-367.

\bibitem{nH74} N.~Hindman, Finite sums from sequences within cells of a partition of $\mathbb{N}$, J. Combin. Theory (Series A)~17 (1974), 1-11.





\bibitem{HS12} N.~Hindman and D.~Strauss, Algebra in the Stone-\v{C}ech Compactification: Theory and Applications, Second Edition, Walter de Gruyter,
    Berlin (2012).



\bibitem{kM75} K.~Milliken, Ramsey's theorem with sums or unions, J.~Combin.~Theory Ser.~A {18} (1975), 276-290.

\bibitem{rdz16} A. Rajah, W.C. Teh, Z.Y. Teoh, Are Ramsey algebras essentially semigroups, preprint.







\bibitem{aT76} A.~Taylor, A canonical partition relation for finite subsets of $\omega$, J.~Combin. Theory Ser.~A~{21} (1976), 137Ð146.

\bibitem{wcT13b} W.C.~Teh, Ramsey algebras and strongly reductible ultrafilters, Bull. Malays. Math. Sci. Soc.~37(4) (2014), 931-938.
\bibitem{wcT13a} W.C.~Teh, Ramsey algebras and formal orderly terms, Notre Dame J. Form. Log. 58(1) (2017), 115--125.
\bibitem{wcT13} W.C.~Teh, Ramsey algebras, J. Math. Log.16(2) (2016), 16 pages.
\bibitem{wcT13c} W.C.~Teh, Ramsey algebras and the existence of idempotent ultrafilters, Arch. Math. Logic 55 (2016), 475-491.
\bibitem{sT10} S.~Todorcevic, Introduction to Ramsey Spaces, Princeton University Press, 2010.
\end{document}